\documentclass[preprint,12pt]{elsarticle}

\usepackage{amsmath, amsthm, amssymb}
\usepackage{mathtools}
\usepackage{hyperref}
\usepackage{graphicx}
\usepackage{tikz}
\usetikzlibrary{decorations.pathreplacing}

\newtheorem{theorem}{Theorem}[section]

\newtheorem{corollary}[theorem]{Corollary}
\newtheorem{remark}[theorem]{Remark}

\newtheorem{assumption}[theorem]{Assumption}

\begin{document}

\begin{frontmatter}

\title{Spectral Analysis and Redistribution Thresholds for Cut-Cell Finite-Volume Methods}

\author[stevens]{Justo E. Karell\corref{cor1}}
\ead{justokarell@gmail.com}
\cortext[cor1]{Corresponding author}
\affiliation[stevens]{
  organization={Department of Computer Science, Stevens Institute of Technology},
  addressline={1 Castle Point Terrace},
  city={Hoboken},
  state={NJ},
  postcode={07030},
  country={USA}
}

\begin{abstract}
In finite-volume methods on embedded-boundary meshes, arbitrarily small cut
cells can create $O(\alpha^{-1})$ coefficients when the time step is chosen
for the regular grid.  For a one-dimensional periodic upwind discretization,
we show that this instability is carried by a discrete eigenmode localized at
the cut cell.  Blending the unstabilized update with the volume-weighted
state obtained by merging the cut cell with its left neighbor gives a spectral
threshold whose leading value, when the cut-cell Courant number $L$ is fixed,
is
\[
    s_0(L)=\frac{L-2}{L-1},\qquad L>2.
\]
We prove that the cell-merging correction becomes aligned with the localized
unstable mode as $\alpha\to0$.  We also prove a block-matrix result showing
that cut-cell rows with coefficients that diverge as $\alpha\to0$ generate a finite cluster of unbounded
eigenvalues whose invariant subspace approaches the subspace spanned by the
cut-cell coordinates.  For the second-order monotonic upstream-centered
scheme for conservation laws (MUSCL) with a minmod limiter, we analyze the branch of the piecewise-linear minmod map containing the
localized mode.  The corresponding
nonlinear eigenvalue problem gives
\[
    s_{\mathrm M}(\alpha,L)
    =
    \frac{L-2}{L-1}
    -
    \frac{L(L-2)}{4(L-1)^2}\alpha
    +O(\alpha^2),
\]
and the second-order strong-stability-preserving Runge--Kutta (SSPRK2) update
reaches unit amplification at the same crossing on that branch.  In the
sine-wave benchmark at $E_{1,h}(T)=10^{-3}$, this MUSCL value requires an
estimated $241$ cells versus $286$ for the 2022 $\alpha$--$\beta$ weighting
with neighborhood reconstruction slopes set to zero, $334$ for the tested
specialization of the 2024 provably stable weighted method, and $340$ for full
two-cell merging when $\alpha=0.15$.  For $\alpha=0.10$, the corresponding
counts are $277$, $292$, $323$, and $326$.  Across five interpolated error
levels, the reduction relative to the tested 2024 specialization is
$25.0\%$--$28.5\%$ for $\alpha=0.15$ and $12.9\%$--$14.3\%$ for
$\alpha=0.10$.  On a separate two-dimensional $45^\circ$ embedded-boundary
channel, the smallest blending parameter for which the assembled
first-order matrix has spectral radius at most one is $0.363$--$0.879$, rather
than $1$.  The one-dimensional formula is nearly exact for the smallest
triangles and differs more for the larger cut cells.  These results show that
full redistribution is not always necessary to control the unstable cut-cell
mode, and that using a smaller blending parameter can reduce numerical
error in the tested problems.
\end{abstract}

\begin{keyword}
cut cell \sep state redistribution \sep spectral radius \sep embedded boundary
\sep finite-volume method \sep eigenvalue localization
\end{keyword}

\end{frontmatter}

\noindent\textbf{Mathematics Subject Classification (2020):}
65M12, 65M08, 76M12

\footnotetext{Abbreviations used in the manuscript: CFL, Courant--Friedrichs--Lewy;
SRD, state redistribution; MUSCL, monotonic upstream-centered scheme for
conservation laws; SSPRK2, second-order strong-stability-preserving Runge--Kutta.}

\section{Introduction}
\label{sec:intro}

Embedded-boundary and cut-cell methods represent complex physical boundaries
while retaining a simple background mesh.  Their main explicit-time-stepping
difficulty is geometric.  If a boundary leaves only a fraction
$\alpha\in(0,1]$ of a background control volume, then a conservative
finite-volume update divides the net numerical flux by this reduced volume.
A time step that satisfies the background Courant--Friedrichs--Lewy (CFL) condition can therefore produce
$O(\alpha^{-1})$ coefficients in the cut-cell row of the update matrix.

Several methods remove this small-cell restriction.  Flux redistribution
\cite{ColellGravesKeenModiano2006}, h-box methods
\cite{BergerHelzelLeVeque2003}, explicit-implicit updates
\cite{MayBerger2017,MayLaakmann2024}, and domain-of-dependence stabilization
\cite{EngwerMayNussingStreitburger2020,MayStreitburger2022,Giuliani2022,PetriBirkeEngwerRanocha2026}
modify the local update or its effective domain of dependence.  State
redistribution (SRD) instead forms a provisional finite-volume update, merges
small cells with neighboring cells into larger neighborhoods, and
redistributes the merged states while preserving conservation
\cite{BergerGiuliani2021}.  Weighted SRD reduces dissipation through
geometry-dependent weights \cite{GiulianiEtAl2022}; Berger and Giuliani
\cite{BergerGiuliani2024} further construct a weighted SRD scheme with
monotonicity, total-variation-diminishing, and GKS-stability results in the
configurations covered by their analysis.  Update-Magnitude SRD (UM-SRD)
blends redistribution according to the magnitude of the provisional update
\cite{Karell2026}.  The scalar blend studied here is distinct from these
weight constructions: the redistribution operator is held fixed and only its
overall blend with the base update is varied.

A closely related stability analysis is Berger's study of cut-cell merging
\cite{Berger2015}.  For one-dimensional linear advection with first-order
upwinding, that work uses GKS stability theory to quantify the admissible time
step after cell merging and shows that the usual cut-cell CFL restriction can
be overly conservative.  Berger and Giuliani \cite{BergerGiuliani2024}, by
contrast, construct geometry-dependent weighted SRD coefficients and prove
stability properties of that weighted algorithm.  The question here is
different from both: the base update and the redistribution operator are held
fixed, only the scalar blend between them is varied, and the blend is selected
by following the localized eigenvalue of the full discrete operator to the
unit circle.  The resulting quantity is therefore a redistribution threshold for the blending parameter, not a time-step bound or a new geometric weighting
formula.

Let
\[
    \overline{\mathbb D}:=\{z\in\mathbb C:|z|\le1\}
\]
denote the closed unit disk.  We consider the following threshold problem:
given a finite-volume operator with a localized eigenvalue outside
$\overline{\mathbb D}$ and a specified redistribution operator, determine the
scalar blend at which that eigenvalue re-enters $\overline{\mathbb D}$.  We
analyze this problem for positive-speed scalar
linear advection on a one-dimensional periodic mesh of $N$ cells with one cut
cell, discretized by first-order upwind with cell merging as the redistribution operator.  Let
$N$ denote the total number of cells and let $\lambda$ denote the background
CFL number.  Define the local cut-cell CFL number by
\[
    L=\lambda_\alpha=\frac{\lambda}{\alpha}.
\]  The unstabilized operator has a localized
eigenvalue near $1-L$, so $L>2$ places that eigenvalue outside
$\overline{\mathbb D}$.  For a square matrix $M$, write $\sigma(M)$ for its
spectrum and
\[
    \rho(M):=\max_{\mu\in\sigma(M)}|\mu|.
\]
Throughout the paper, we use the condition $\rho(A_s)\le1$ only as a redistribution condition; because $A_s$ is generally nonnormal, this condition is not identified with power boundedness.
Because $A_s$ is generally nonnormal, power boundedness and finite transient
amplification remain separate questions.  Let $A$ denote the base update matrix and
$A_{\mathcal R}$ the fully redistributed update matrix.  We blend them as
\[
    A_s=(1-s)A+sA_{\mathcal R},\qquad 0\le s\le1,
\]
where $s$ is the blending parameter $s$: $s=0$ gives the base update and
$s=1$ gives full cell merging.  We then derive the full $N$-cell
characteristic equation.  For fixed $L>2$ and
$N\ge4$, the localized eigenvalue crosses the value $-1$ at
\[
 s_{-1}^{(N)}(\alpha,L)
 =
 \frac{L-2}{L-1}
 -
 \frac{L(L-2)}{2(L-1)^2}\alpha^2
 +O(\alpha^3).
\]
All other nonconstant eigenvalues are inside the unit disk near this crossing
for sufficiently small $\alpha$.  Consequently,
\[
    s_0(L)=\frac{L-2}{L-1}
\]
is asymptotically sufficient and differs from the exact local redistribution
threshold by $O(\alpha^2)$.  The contribution is a derivation of the
blending parameter from the localized unstable branch of the discrete
spectrum.

The unstable eigenvector converges to the cut-cell coordinate vector as
$\alpha\to0$, so the unstable mode is concentrated at the cut cell.
The two-cell-merging operation changes the state in an asymptotically aligned direction.
Consequently, varying $s$ directly changes the amplification factor of the
localized branch used to define the redistribution threshold.
Eigenvalue spectra of embedded-boundary discretizations have previously been
used as a stability diagnostic \cite{DevendranGravesJohansenLigocki2017}; here
the characteristic equation identifies a localized amplification mode and its
spectral crossing analytically.

Two limits are used and are kept separate throughout.  For the singular
eigenvalue and invariant-subspace analysis, $\lambda$ is fixed while
$\alpha\to0$, so $L=\lambda/\alpha\to\infty$.  For the spectral-threshold
expansion, $L$ is fixed while $\alpha\to0$, so $\lambda=\alpha L\to0$.
The higher-order analysis later derives the corresponding localized
MUSCL--minmod crossing directly.  Its finite-$\alpha$ numerical benchmark
uses a fixed background CFL and tests both the leading value $s_0(L)$ and the
finite-$\alpha$ MUSCL value $s_{\mathrm M}(\alpha,L)$.

At fixed regular-grid CFL $\lambda$, the local CFL is $L=\lambda/\alpha$, so the leading
prescription becomes
\[
    s_0(\lambda/\alpha)
    =\frac{\lambda-2\alpha}{\lambda-\alpha}
    =1-\frac{\alpha}{\lambda-\alpha}
    \longrightarrow 1
    \qquad (\alpha\to0).
\]
Thus the difference $1-s_0$ from full cell merging is largest for moderately
restrictive cut cells.  For arbitrarily small cut cells at a fixed regular-grid
time step, $s_0$ approaches $1$.

We next identify which parts of the model calculation extend beyond one cut
cell.  After placing the cut-cell unknowns first, a class of update matrices has
a finite-dimensional block whose entries grow as $\alpha\to0$.  We prove that
the leading coefficient matrix of this block determines the unbounded
eigenvalues and that the corresponding invariant subspace approaches the
subspace spanned by the cut-cell coordinates.  This localization result does
not depend on the particular scalar threshold formula derived for the upwind
model.

The main contributions are:
\begin{enumerate}
    \item an explicit characteristic-polynomial analysis of the one-cut-cell
    periodic upwind operator, including the unbounded eigenvalue and
    eigenvector localization;
    \item a definition of cell merging on the same full operator and a proof
    that its correction becomes asymptotically aligned with the localized
    unstable direction in the volume-weighted inner product;
    \item a full-$N$ derivation of the local eigenvalue crossing and the
    leading-order prescription $s_0(L)=(L-2)/(L-1)$ for $L>2$;
    \item a block-matrix theorem relating the matrix $\Gamma$ to the unbounded spectrum and localized invariant subspace, with
    multiple-cut-cell, multidimensional, and finite-volume consequences;
    \item a calculation of $\mathcal E_s(v)=\nu v$ in the localized MUSCL--minmod
    piecewise-linear region, together with an SSPRK2 corollary showing the same crossing;
    \item a two-dimensional rotated-channel experiment in which the smallest
    $s$ satisfying $\rho(A_s)\le1$ is below full cell merging, with the
    one-dimensional prediction nearly exact for the smallest tested cut cells;
    and
    \item higher-order numerical evidence that the blending parameter from
    the MUSCL analysis preserves the observed convergence order, lowers
    smooth-solution error, and reaches prescribed error levels with smaller
    interpolated total cell counts than the tested stronger redistributions.
\end{enumerate}

The first-order threshold theorem applies to the periodic upwind model with
cell merging, and Section~\ref{sec:muscl-theory} derives the analogous
localized crossing for the MUSCL--minmod stage.  The multidimensional result
establishes leading-order localization at the cut cell, while the two-dimensional
experiment in Section~\ref{sec:2d-experiment} computes the threshold for the assembled
first-order spectrum directly.  Nonlinear conservation laws, systems, adjacent cut cells, and
multidimensional weighted-SRD neighborhoods still require separate analysis
of the corresponding discrete operator and of how redistribution changes its
unstable eigenvalues.

The paper follows the argument in that order.  Section~\ref{sec:1d} defines
the model problem and derives the localized unstable mode.
Section~\ref{sec:stabilization} defines cell merging, proves its alignment
with the mode, and derives the local eigenvalue-crossing threshold for the
full periodic operator.
Section~\ref{sec:singular} develops the block-matrix analysis.
Sections~\ref{sec:multi}--\ref{sec:verify} give multiple-cut-cell,
multidimensional, and finite-volume consequences.  Section~\ref{sec:muscl-theory}
derives the MUSCL--minmod localized crossing and its SSPRK2 consequence.
Section~\ref{sec:numerics} then visualizes the one-dimensional localized eigenvalue behavior, computes the redistribution threshold of an assembled two-dimensional cut-cell operator, and
measures the higher-order accuracy consequences of partial redistribution.

\section{One-Dimensional One-Cut-Cell Model}
\label{sec:1d}

\subsection{Model problem and update operator}

Consider the scalar advection equation
\begin{equation}
    u_t+a u_x=0,\qquad a>0,
    \label{eq:advection}
\end{equation}
on a periodic one-dimensional domain discretized into $N$ cells indexed
periodically by $i=0,\ldots,N-1$.  The periodic setting isolates the
small-volume singularity from physical boundary-condition effects; it is a
model operator for the small-cell instability rather than a geometrically literal
one-dimensional embedded boundary.  Let $h$ denote the width of a regular
background cell, let $\Delta t$ be the time step, and define the background
CFL number
\[
    \lambda=\frac{a\Delta t}{h},\qquad 0<\lambda\le1.
\]
All cells have length $h$ except one cut cell $c$, whose length is
$\alpha h$ with $\alpha\in(0,1)$.  Let $U_i^n$ denote the cell average in
cell $i$ at time $t^n$, and let $U^n=(U_0^n,\ldots,U_{N-1}^n)^\top$.
Following the notation of Berger and Giuliani for SRD, we denote the
\emph{provisional} base-scheme update by $\widehat U$.  The positive-speed
first-order upwind step is
\[
 \widehat U_i
 =
 U_i^n-\frac{a\Delta t}{|C_i|}
 \left(U_i^n-U_{i-1}^n\right),
 \qquad
 |C_i|=\begin{cases}\alpha h,&i=c,\\ h,&i\ne c.\end{cases}
\]
Thus $\widehat U=AU^n$, where the provisional update matrix $A$ is defined by
\begin{equation}
    (AU)_i=(1-\lambda_i)U_i+\lambda_i U_{i-1},
    \qquad
    \lambda_i=
    \begin{cases}
        \lambda_\alpha:=\lambda/\alpha, & i=c,\\
        \lambda, & i\ne c,
    \end{cases}
    \label{eq:A}
\end{equation}
with periodic indexing.  Let $e_i\in\mathbb R^N$ denote the $i$th canonical
coordinate vector.  The only singular row as $\alpha\to0$ is the cut-cell
row.  If $A_{\mathrm{bg}}$ denotes the uniform periodic upwind matrix at CFL $\lambda$,
then
\begin{equation}
    A
    =
    A_{\mathrm{bg}}+(\lambda_\alpha-\lambda)e_c(e_{c-1}-e_c)^\top.
    \label{eq:rankone}
\end{equation}

Equation~\eqref{eq:A} is the algebraic form of the small-cell problem.  The
same flux difference that is divided by $h$ on a regular cell is divided by
$\alpha h$ on the cut cell.  Thus the local CFL number is
$\lambda_\alpha=\lambda/\alpha$.  The analysis below determines how this
single singular row changes the spectrum.

\subsection{Characteristic polynomial and the localized root}

By periodic translation we may set $c=0$.  Let $Av=\mu v$.  On every regular
row $i=1,\ldots,N-1$,
\[
    (\mu-1+\lambda)v_i=\lambda v_{i-1}.
\]
For $\mu\ne1-\lambda$, define
\begin{equation}
    q=\frac{\lambda}{\mu-1+\lambda}.
    \label{eq:qdef}
\end{equation}
Then
\begin{equation}
    v_i=q^i v_0,\qquad i=1,\ldots,N-1,
    \label{eq:rec}
\end{equation}
and
\begin{equation}
    \mu(q)=1-\lambda+\frac{\lambda}{q}.
    \label{eq:muq-base}
\end{equation}
Substituting $v_{N-1}=q^{N-1}v_0$ into the cut-cell row gives
\begin{equation}
    P_\alpha(q):=q^N-(1-\alpha)q-\alpha=0.
    \label{eq:poly}
\end{equation}
Conversely, every nonzero root of~\eqref{eq:poly} gives an eigenvalue through
\eqref{eq:muq-base}.  Since $P_\alpha(0)=-\alpha$, all roots are nonzero for
$\alpha>0$.

The localized root lies near the zero of the linear part of $P_\alpha$.  Set
\[
    \widetilde q=-\frac{\alpha}{1-\alpha}.
\]
Writing $q=\widetilde q+\eta$ and using
$(1-\alpha)\widetilde q+\alpha=0$ gives
\[
    \eta=\Phi_\alpha(\eta)
    :=\frac{(\widetilde q+\eta)^N}{1-\alpha}.
\]
On the disk
\[
    |\eta|\le
    r_\alpha:=\frac{2|\widetilde q|^N}{1-\alpha},
\]
we have $r_\alpha=O(\alpha^N)=o(|\widetilde q|)$.  Therefore, for sufficiently
small $\alpha$, $\Phi_\alpha$ maps the disk into itself and
\[
    \sup_{|\eta|\le r_\alpha}|\Phi_\alpha'(\eta)|
    =
    O(\alpha^{N-1})<1.
\]
The contraction mapping theorem gives a unique root in this disk:
\begin{equation}
    q_u=-\frac{\alpha}{1-\alpha}+O(\alpha^N).
    \label{eq:qu}
\end{equation}

\begin{theorem}[Unique unbounded eigenvalue]
\label{thm:eigenvalue}
Fix $N\ge3$ and $\lambda\in(0,1]$.  As $\alpha\to0$, the matrix $A$ has a
unique unbounded eigenvalue
\begin{equation}
    \mu_u
    =
    1-\frac{\lambda}{\alpha}
    +O(\alpha^{N-2}),
    \label{eq:muu}
\end{equation}
while all other eigenvalues remain bounded.
\end{theorem}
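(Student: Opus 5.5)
We use the correspondence between eigenvalues of $A$ and roots of $P_\alpha$ established above, together with the localized root $q_u$ from \eqref{eq:qu}. Every eigenvalue $\mu\ne 1-\lambda$ has the form $\mu=1-\lambda+\lambda/q$ for a nonzero root $q$ of \eqref{eq:poly}. The exceptional value $\mu=1-\lambda$ is bounded. Since $P_\alpha$ has degree $N$, $A$ has at most $N$ eigenvalues of this form, and these account for the full spectrum up to the exceptional value.

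\textbf{Step 1: the localized eigenvalue.} Substitute $q_u=-\alpha/(1-\alpha)+O(\alpha^N)$ into \eqref{eq:muq-base}. Factoring out $\widetilde q$ gives
\[
\frac{1}{q_u}=\frac{1}{\widetilde q}\bigl(1+O(\alpha^{N-1})\bigr)^{-1}=-\frac{1-\alpha}{\alpha}+O(\alpha^{N-2}).
\]
Hence
\[
\mu_u=1-\lambda-\lambda\frac{1-\alpha}{\alpha}+O(\alpha^{N-2})=1-\frac{\lambda}{\alpha}+O(\alpha^{N-2}).
\]
This is \eqref{eq:muu}, and $\mu_u$ is unbounded as $\alpha\to0$.

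\textbf{Step 2: the remaining roots stay away from zero.} This is the main point. An eigenvalue is unbounded exactly when its $q$ tends to $0$, so it suffices to show that the other $N-1$ roots of $P_\alpha$ are bounded away from $0$ uniformly for small $\alpha$.

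I would apply Rouché's theorem on the circle $|q|=1/2$, comparing $P_\alpha$ with $-(1-\alpha)q$. On that circle, $|q^N-\alpha|\le 2^{-N}+\alpha<(1-\alpha)/2$ for small $\alpha$, since $N\ge3$. So $P_\alpha$ has exactly one root in $|q|<1/2$, and by uniqueness this root is $q_u$.

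The other $N-1$ roots therefore satisfy $|q|\ge1/2$. For these, $|\mu|\le |1-\lambda|+2\lambda\le 3$, so they are bounded.

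\textbf{Step 3: unbounded eigenvalues must come from small roots.} Any unbounded family of eigenvalues must have $|1/q|\to\infty$, which forces $q$ to be a root inside $|q|<1/2$, namely $q_u$. This gives uniqueness of the unbounded eigenvalue.

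I would also note that $q_u$ is a simple root, since Rouché counts exactly one root in the disk. The corresponding eigenvector $v_i=q_u^i$ is then well-defined.

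The only delicate point is making sure the correspondence between roots and eigenvalues does not miss eigenvalues. Missed eigenvalues could occur either at $\mu=1-\lambda$ or through multiplicity mismatches. In both cases the missed eigenvalue is bounded, so it does not affect the conclusion.
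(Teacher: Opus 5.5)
Your proof is correct, but Step~2 takes a different route from the paper. The paper applies Rouch\'e on the shrinking circle $|q|=C\alpha$, comparing $P_\alpha$ with its linear part $-(1-\alpha)q-\alpha$, to show that exactly one root is $O(\alpha)$. It then argues separately, via the coefficientwise limit $P_\alpha(q)\to q(q^{N-1}-1)$ and continuity of roots, that the other $N-1$ roots converge to the $(N-1)$st roots of unity and so stay away from $0$.

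You instead make a single Rouch\'e comparison with $-(1-\alpha)q$ on the fixed circle $|q|=1/2$. In one step this isolates $q_u$ and gives explicit, $\alpha$-uniform bounds for $\alpha<1/4$: every other root has $|q|>1/2$, so every other eigenvalue has $|\mu|\le 1+\lambda$. That is shorter and more quantitative than the paper's two-step argument.

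What the paper's route buys is the limiting location of the regular roots on the unit circle. Corollary~\ref{cor:threshold} and Theorem~\ref{thm:criterion} reuse exactly this to expand $\mu_k=1+\alpha L(q_k^{-1}-1)+O(\alpha^2)$ and conclude $|\mu_k|<1$. Your bound $|q|>1/2$ is enough for boundedness but not for that later step.

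One small correction to your closing remark. The claim that a ``multiplicity mismatch'' could only miss bounded eigenvalues is not right as stated, because an extra copy of $\mu_u$ would be unbounded. The issue does not actually arise, for two reasons:
\begin{itemize}
\item The value $\mu=1-\lambda$ is never an eigenvalue. The regular rows force $v_0=\cdots=v_{N-2}=0$, and then the cut-cell row forces $v_{N-1}=0$.
\item Expanding the cyclic bidiagonal determinant gives $\det(\mu I-A)=-\lambda^N\alpha^{-1}q^{-N}P_\alpha(q)$ with $q=\lambda/(\mu-1+\lambda)$. Root multiplicities therefore transfer exactly through the M\"obius change of variables.
\end{itemize}
Since $q_u$ is a simple root, $\mu_u$ is algebraically simple. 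With that sentence in place of the hedge, the argument is complete in both the set-wise and the multiplicity sense.
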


\begin{proof}
The root~\eqref{eq:qu} gives
\[
    \frac1{q_u}
    =
    -\frac{1-\alpha}{\alpha}
    +O(\alpha^{N-2}),
\]
and substitution into~\eqref{eq:muq-base} yields~\eqref{eq:muu}.

It remains to show that no second root approaches zero.  Choose a fixed
$C>1$.  For sufficiently small $\alpha$, the circle $|q|=C\alpha$ contains
$\widetilde q$.  Write
\[
    P_\alpha(q)=g_\alpha(q)+q^N,\qquad
    g_\alpha(q)=-(1-\alpha)q-\alpha.
\]
On $|q|=C\alpha$,
\[
 |g_\alpha(q)|
 \ge
 \alpha\bigl((1-\alpha)C-1\bigr),
 \qquad
 |q^N|=C^N\alpha^N.
\]
For sufficiently small $\alpha$, $|q^N|<|g_\alpha(q)|$ on the circle.
Rouch\'e's theorem therefore gives exactly one root inside
$|q|<C\alpha$.

For the remaining roots,
\[
 P_\alpha(q)\longrightarrow q(q^{N-1}-1)
\]
coefficientwise.  The $N-1$ nonzero limiting roots are simple and lie on the
unit circle, so the remaining roots of $P_\alpha$ stay bounded away from zero.
Equation~\eqref{eq:muq-base} then shows that their eigenvalues remain bounded.
\end{proof}

\begin{corollary}[Limiting redistribution boundary]
\label{cor:threshold}
Fix $N\ge3$ and let $L=\lambda/\alpha$ remain in a compact subset of
$(0,\infty)$ as $\alpha\to0$, so that $\lambda=\alpha L$.  The localized
branch satisfies
\begin{equation}
    \mu_u=1-L+O(\alpha^{N-1}).
    \label{eq:muu-fixedL}
\end{equation}
The constant branch is exactly $\mu=1$, and every other branch satisfies
\begin{equation}
    \mu_k
    =1+\alpha L(q_k^{-1}-1)+O(\alpha^2),
    \qquad q_k^{N-1}=1,\quad q_k\ne1.
    \label{eq:regular-fixedL}
\end{equation}
Consequently, for every $\varepsilon>0$, the full base operator is spectrally stable for $0<L\le2-\varepsilon$ and spectrally unstable for
$L\ge2+\varepsilon$, once $\alpha$ is sufficiently small.  Thus $L=2$ is the
limiting redistribution boundary.  No finite-$\alpha$ stability claim is made at
$L=2$ itself.
\end{corollary}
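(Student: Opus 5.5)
The plan is to reuse the root analysis of $P_\alpha$ from the proof of Theorem~\ref{thm:eigenvalue}, but now substitute $\lambda=\alpha L$ into \eqref{eq:muq-base}. The key observation is that $P_\alpha(q)=q^N-(1-\alpha)q-\alpha$ does not involve $\lambda$ at all. The roots $q$ therefore behave exactly as in the fixed-$\lambda$ analysis, and only the map $q\mapsto\mu$ changes.

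\textbf{Localized branch.} From \eqref{eq:qu} we have $q_u^{-1}=-(1-\alpha)/\alpha+O(\alpha^{N-2})$. Hence
\[
\mu_u=1-\alpha L+\alpha L\,q_u^{-1}=1-\alpha L-L(1-\alpha)+O(\alpha^{N-1})=1-L+O(\alpha^{N-1}).
\]
The error is uniform for $L$ in a compact set, since $L$ multiplies only an $\alpha$-independent constant. This gives \eqref{eq:muu-fixedL}.

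\textbf{Constant branch.} Note that $P_\alpha(1)=1-(1-\alpha)-\alpha=0$ for every $\alpha$, so $q=1$ is always a root and gives $\mu=1$ exactly. The constant vector is indeed an eigenvector, because every row of $A$ sums to one.

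\textbf{Regular branches.} The remaining $N-2$ roots converge to the simple roots $q_k\neq1$ of $q^{N-1}=1$, as shown in the proof of Theorem~\ref{thm:eigenvalue}. Since these limiting roots are simple, the implicit function theorem applied to $P_\alpha(q)$ at $\alpha=0$ gives $q_k(\alpha)=q_k+O(\alpha)$. These roots are bounded away from $0$, so $q_k(\alpha)^{-1}=q_k^{-1}+O(\alpha)$. Substituting,
\[
\mu_k=1+\alpha L\bigl(q_k(\alpha)^{-1}-1\bigr)=1+\alpha L(q_k^{-1}-1)+O(\alpha^2),
\]
which is \eqref{eq:regular-fixedL}. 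Together with $q_u$ and $q=1$, this accounts for all $N$ roots of $P_\alpha$. I would also confirm that no eigenvalue is lost at the excluded value $\mu=1-\lambda$, i.e.\ $q=\infty$. If $\mu=1-\lambda$, the regular rows force $v_0=\dots=v_{N-2}=0$, and the cut row then forces $v_{N-1}=0$ unless $\lambda_\alpha=\lambda$, which cannot happen for $\alpha<1$.

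\textbf{Stability dichotomy.} For the regular branches, write $q_k=e^{i\theta_k}$ with $\theta_k\not\equiv0$. Then
\[
|\mu_k|^2=1+2\alpha L(\cos\theta_k-1)+O(\alpha^2).
\]
Since $\cos\theta_k-1\le-c<0$ uniformly over the finitely many $k$, we get $|\mu_k|<1$ for small $\alpha$. For the localized branch, if $L\in[\varepsilon',2-\varepsilon]$ then $|1-L|\le1-\min(\varepsilon,\varepsilon')$, and the $O(\alpha^{N-1})$ error keeps $|\mu_u|<1$.

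The main obstacle is the range of very small $L$: near $L=0$ the localized eigenvalue approaches $1$, and the regular branches' margin $2\alpha L c$ degenerates. Because the corollary assumes $L$ lies in a compact subset of $(0,\infty)$, I will work on $L\in[\ell,2-\varepsilon]$ with $\ell>0$ fixed. To control the localized eigenvalue there, I would use the exact relation $\mu_u=1-\alpha L+\alpha Lq_u^{-1}$ together with $q_u\in(-1,0)$, which holds because $q_u$ is real. Real-ness follows from the uniqueness of the root in the disk, since $\bar q_u$ is also a root there. This gives $\mu_u<1$ and $\mu_u=1-L+O(\alpha^{N-1})>-1$.

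For $L\ge2+\varepsilon$, we have $|1-L|\ge1+\varepsilon$, so $|\mu_u|>1$ once $\alpha$ is small, and the operator is spectrally unstable. No claim is needed at $L=2$.
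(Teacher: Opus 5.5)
Your proposal is correct and follows the paper's proof essentially step for step. Both use the $\lambda$-independence of $P_\alpha$ and substitute $\lambda=\alpha L$ into \eqref{eq:muq-base} for the localized root, take the exact root $q=1$, perturb the simple nontrivial $(N-1)$st roots of unity to get $|\mu_k|^2=1+2\alpha L(\cos\varphi_k-1)+O(\alpha^2)$, and compare $|1-L|$ with $1\pm\varepsilon$. You add two refinements the paper leaves implicit: you use the lower endpoint of the compact $L$-range, which is needed because $|1-L|\le1-\varepsilon$ fails for $0<L<\varepsilon$, and you check that $\mu=1-\lambda$ is not an eigenvalue, although that check is already implied by the $N$ simple roots and the injectivity of $q\mapsto\mu$.
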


\begin{proof}
The root expansion~\eqref{eq:qu} is independent of $\lambda$.  Substituting
$\lambda=\alpha L$ into~\eqref{eq:muq-base} gives
\[
 \mu_u
 =1-\alpha L+\alpha L\left[-\frac{1-\alpha}{\alpha}
      +O(\alpha^{N-2})\right]
 =1-L+O(\alpha^{N-1}),
\]
which proves~\eqref{eq:muu-fixedL}.  The root $q=1$ of~\eqref{eq:poly} is
exact and gives the conservation eigenvalue $\mu=1$.  The other roots converge
to the nontrivial $(N-1)$st roots of unity.  For such a root
$q_k=e^{i\varphi_k}$, analytic perturbation and~\eqref{eq:muq-base} give
\eqref{eq:regular-fixedL}, hence
\[
 |\mu_k|^2
 =1+2\alpha L(\cos\varphi_k-1)+O(\alpha^2)<1
\]
for sufficiently small $\alpha$.  If $L\le2-\varepsilon$, then
$|1-L|\le1-\varepsilon$ and~\eqref{eq:muu-fixedL} places the localized branch
strictly inside the unit disk.  If $L\ge2+\varepsilon$, then
$|1-L|\ge1+\varepsilon$ and the localized branch lies outside it.  This proves
the stated full-operator result away from the boundary $L=2$.
\end{proof}

The value $2$ should not be interpreted as a universal cut-cell CFL condition.
It is the limiting boundary for the particular periodic first-order upwind
operator~\eqref{eq:A}.  A finite-$\alpha$ version of the localized crossing can
be written directly in terms of the localized root $q_u(\alpha)$ of
\eqref{eq:poly}.  Since that root is independent of $L$, setting
$\mu_u=-1$ in \eqref{eq:muq-base} gives
\begin{equation}
    L_*(\alpha,N)
    =
    \frac{2q_u(\alpha)}{\alpha\,[q_u(\alpha)-1]}
    =
    2+O(\alpha^{N-1}).
    \label{eq:Lstar-finite-alpha}
\end{equation}
Thus the finite-$\alpha$ displacement of the localized crossing from $L=2$
is extremely small when $N$ is large.  This formula identifies the crossing
of the localized branch; the full spectral statement at finite $\alpha$
still requires the remaining branches to lie inside the unit disk, as in the
proof of Corollary~\ref{cor:threshold}.

\begin{theorem}[Eigenvector localization]
\label{thm:loc1d}
Normalize the eigenvector associated with $\mu_u$ by $(v_u)_c=1$.  Let
$d_+(c,i)\in\{0,\ldots,N-1\}$ be the number of forward upwind-recurrence steps
from $c$ to $i$ around the periodic mesh.  Then
\begin{equation}
    |(v_u)_i|
    =
    O\!\left(\alpha^{d_+(c,i)}\right),
    \qquad
    \|v_u-e_c\|_2=O(\alpha).
    \label{eq:loc1d}
\end{equation}
\end{theorem}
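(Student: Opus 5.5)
We prove the statement with the explicit eigenvector formula from the characteristic-polynomial derivation, so no new spectral argument is needed. By periodic translation we take $c=0$. The regular rows of $Av=\mu v$ give the recurrence~\eqref{eq:rec}, so with the normalization $(v_u)_0=1$,
\[
    (v_u)_i=q_u^{\,i},\qquad i=0,1,\ldots,N-1.
\]
Here $q_u$ is the localized root of~\eqref{eq:poly} constructed by the contraction argument. The cut-cell row is exactly the relation $P_\alpha(q_u)=0$, so this vector is a genuine eigenvector. Because $\mu_u$ is simple, it is also the unique eigenvector with $(v_u)_c=1$.

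Next we identify $d_+(c,i)$. With $c=0$ and positive speed, the upwind recurrence propagates from $i-1$ to $i$. The number of forward steps from $c$ to $i$ around the periodic mesh is therefore $d_+(c,i)=i$ for $i=0,\ldots,N-1$; for general $c$ it is $(i-c)\bmod N$. The expansion~\eqref{eq:qu} gives $|q_u|\le\alpha/(1-\alpha)+O(\alpha^N)\le 2\alpha$ for sufficiently small $\alpha$. Hence
\[
    |(v_u)_i|=|q_u|^{\,i}\le (2\alpha)^{d_+(c,i)}=O\bigl(\alpha^{d_+(c,i)}\bigr).
\]
The constant $2^{i}\le 2^{N-1}$ depends only on the fixed $N$, so the bound is uniform in $i$. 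This proves the first estimate in~\eqref{eq:loc1d}.

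For the second estimate, $v_u-e_c$ has zero $c$-component and components $q_u^{\,i}$ for $i=1,\ldots,N-1$. Summing the geometric series,
\[
    \|v_u-e_c\|_2^2=\sum_{i=1}^{N-1}|q_u|^{2i}\le \frac{|q_u|^2}{1-|q_u|^2}=O(\alpha^2),
\]
so $\|v_u-e_c\|_2=O(\alpha)$, and the leading term is $|q_u|\approx\alpha$ from the neighbor $i=c+1$.

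There is no real obstacle. The only point needing care is that the eigenvector is well defined: the normalization must be legitimate and the recurrence must apply. The recurrence requires $\mu_u\neq 1-\lambda$, which holds because $\mu_u$ is unbounded by Theorem~\ref{thm:eigenvalue}. The normalization $(v_u)_c=1$ is legitimate because $v_0=0$ would force every $v_i=0$ through the recurrence.
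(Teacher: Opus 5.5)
Your proof is correct and follows the same route as the paper: the regular-row recurrence gives $(v_u)_i=q_u^{d_+(c,i)}$, and $q_u=O(\alpha)$ from \eqref{eq:qu} yields both estimates after summing the finite geometric series. Your well-definedness checks are sound, but you do not need to appeal to simplicity of $\mu_u$: for $\mu_u\ne1-\lambda$ the recurrence you already use determines every eigenvector from $v_0$, so the normalized eigenvector is unique.
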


\begin{proof}
For the localized root, $q_u=O(\alpha)$.  Equation~\eqref{eq:rec} therefore
gives
\[
    (v_u)_i=q_u^{d_+(c,i)}
\]
under the normalization $(v_u)_c=1$.  The component at the nearest forward regular cell is $O(\alpha)$, and
subsequent components are higher powers of $\alpha$.
Summing their squares gives~\eqref{eq:loc1d}.
\end{proof}

Theorem~\ref{thm:loc1d} shows that the unstable eigenvector is concentrated at
the cut cell: each forward recurrence step introduces another factor
$O(\alpha)$.  This localization explains why a redistribution supported on the
cut-cell neighborhood can alter the localized cut-cell eigenvalue efficiently.

\section{State Redistribution and the redistribution Threshold}
\label{sec:stabilization}

\subsection{Two-cell merging and the correction direction}
\label{sec:align}

We first define the cell-merging operator used throughout the analysis.
Following Berger--Giuliani notation, $\widehat U$ denotes a provisional
base-scheme state and $\widehat Q$ a merged neighborhood state.  In the
operator analysis we suppress the time superscript: for any input vector
$U\in\mathbb R^N$, let $\widehat U=AU$.  We merge the cut cell with its
left neighbor, using the neighborhood
\[
    \mathcal N_c=\{c-1,c\}
\]
and forms the volume-weighted neighborhood state
\begin{equation}
    \widehat Q_c
    =
    \frac{\widehat U_{c-1}+\alpha\widehat U_c}{1+\alpha}.
    \label{eq:Qhat}
\end{equation}
In the present nonoverlapping two-cell setting, full redistribution coincides
with cell merging: both cells in the neighborhood are replaced by the same
volume-weighted average.  We denote the resulting one-step operator by
$A_{\mathcal R}$ and define it by
\begin{equation}
 (A_{\mathcal R}U)_i=
 \begin{cases}
     \widehat Q_c, & i=c-1\ \text{or}\ i=c,\\
     \widehat U_i, & \text{otherwise}.
 \end{cases}
 \label{eq:Asrd-def}
\end{equation}
Because~\eqref{eq:Qhat} is weighted by the physical cell lengths, the local
mass $h\widehat U_{c-1}+\alpha h\widehat U_c$ is preserved.  Because standard SRD retains overlapping neighborhood
contributions, we refer to $s=1$ here as full cell merging rather than ``full
SRD.

Figure~\ref{fig:model-srd} summarizes the geometry and the cell-merging step used in the analysis.  It is a definition schematic rather
than a numerical result.

\begin{figure}[ht]
\centering
\begin{tikzpicture}[x=1.25cm,y=1cm,>=stealth]
    % Schematic geometry, not drawn to physical scale.
    \draw (0,0) rectangle (1.2,1.25);
    \draw (1.2,0) rectangle (2.4,1.25);
    \draw[fill=gray!12] (2.4,0) rectangle (3.15,1.25);
    \draw (3.15,0) rectangle (4.35,1.25);

    \node at (0.6,0.38) {$c-2$};
    \node at (1.8,0.38) {$c-1$};
    \node at (2.775,0.38) {$c$};
    \node at (3.75,0.38) {$c+1$};

    \node at (1.8,0.90) {$\widehat U_{c-1}$};
    \node at (2.775,0.90) {$\widehat U_c$};

    \node[below=3pt] at (0.6,0) {$h$};
    \node[below=3pt] at (1.8,0) {$h$};
    \node[below=3pt] at (2.775,0) {$\alpha h$};
    \node[below=3pt] at (3.75,0) {$h$};

    \draw[decorate,decoration={brace,amplitude=5pt}]
        (1.2,1.50) -- (3.15,1.50);
    \node[above=5pt] at (2.175,1.50)
        {$\mathcal N_c=\{c-1,c\}$};

    \node[draw,rounded corners,align=center,text width=4.4cm,inner sep=7pt]
        (merged) at (7.05,0.65)
    {
        $\displaystyle
        \widehat Q_c=
        \frac{\widehat U_{c-1}+\alpha\widehat U_c}{1+\alpha}$
        \\[5pt]
        $(\widehat U_{c-1},\widehat U_c)\mapsto(\widehat Q_c,\widehat Q_c)$
    };
    \draw[->,thick] (3.15,0.65) -- (merged.west);

    \draw[->,thick] (0,-0.72) -- (4.35,-0.72);
    \node[below=3pt] at (2.175,-0.72)
        {$a>0$};
\end{tikzpicture}
\caption{One-dimensional cut-cell geometry and cell merging.
The diagram is schematic and is not drawn to scale.  The cut cell $c$ has
physical width $\alpha h$.  After the base finite-volume step, the provisional
cell averages $\widehat U_{c-1}$ and $\widehat U_c$ are merged over the
neighborhood $\mathcal N_c=\{c-1,c\}$ to form the volume-weighted state
$\widehat Q_c$, which replaces both provisional values.}
\label{fig:model-srd}
\end{figure}

The correction made by full cell merging is
\[
    D(U):=A_{\mathcal R}U-AU.
\]
From~\eqref{eq:Qhat},
\[
 D_{c-1}
 =
 \frac{\alpha(\widehat U_c-\widehat U_{c-1})}{1+\alpha},
 \qquad
 D_c
 =
 -\frac{\widehat U_c-\widehat U_{c-1}}{1+\alpha},
\]
so
\begin{equation}
    D(U)=\Delta(U)\bigl(\alpha e_{c-1}-e_c\bigr),
    \qquad
    \Delta(U)=\frac{\widehat U_c-\widehat U_{c-1}}{1+\alpha}.
    \label{eq:D}
\end{equation}
For the upwind operator, define the two neighboring jumps
\[
    \delta_{c-1}=U_{c-1}-U_{c-2},
    \qquad
    \delta_c=U_c-U_{c-1}.
\]
Then
\begin{equation}
    \Delta(U)
    =
    \frac{\lambda\,\delta_{c-1}
    +(1-\lambda_\alpha)\delta_c}{1+\alpha}.
    \label{eq:Delta-jumps}
\end{equation}

We use the volume-weighted inner product and its induced vector norm:
\begin{equation}
    \langle x,y\rangle_\alpha
    =
    \alpha x_c\overline{y_c}
    +
    \sum_{i\ne c}x_i\overline{y_i},
    \qquad
    \|x\|_\alpha^2=\langle x,x\rangle_\alpha.
    \label{eq:ip}
\end{equation}
For nonzero real vectors $x$ and $y$, define their volume-weighted alignment
by
\begin{equation}
    \mathcal A_\alpha(x,y)
    :=
    \frac{|\langle x,y\rangle_\alpha|}
         {\|x\|_\alpha\|y\|_\alpha}
    \in[0,1].
    \label{eq:alignment-def}
\end{equation}
For sufficiently small $\alpha$, the localized root $q_u$ is real: the
polynomial $P_\alpha$ has real coefficients, and uniqueness of the root in the
contraction disk forces it to equal its complex conjugate.  We therefore take
$v_u\in\mathbb R^N$.

\begin{theorem}[Alignment of the cell-merging correction with the unstable mode]
\label{thm:align}
Let $v_u$ be normalized by $(v_u)_c=1$.  For every real state
$U\in\mathbb R^N$ for which $\Delta(U)\ne0$,
\begin{equation}
    1-\mathcal A_\alpha(D(U),v_u)=O(\alpha)
    \qquad (\alpha\to0).
    \label{eq:align}
\end{equation}
\end{theorem}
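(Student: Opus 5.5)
Since $D(U)=\Delta(U)\,w$ with $w:=\alpha e_{c-1}-e_c$ by~\eqref{eq:D}, and $\mathcal A_\alpha$ is invariant under nonzero scalar multiples of either argument, we have $\mathcal A_\alpha(D(U),v_u)=\mathcal A_\alpha(w,v_u)$ whenever $\Delta(U)\ne0$. The statement therefore reduces to a computation with two explicit vectors that do not depend on $U$. The plan is to compute the three quantities $\langle w,v_u\rangle_\alpha$, $\|w\|_\alpha^2$ and $\|v_u\|_\alpha^2$ from the structure of $v_u$ given in Theorem~\ref{thm:loc1d}, and then expand the resulting ratio in $\alpha$.

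From the proof of Theorem~\ref{thm:loc1d}, the normalized eigenvector satisfies $(v_u)_i=q_u^{d_+(c,i)}$. With $c=0$ this means $(v_u)_i=q_u^i$. The cell $c-1=N-1$ therefore carries $q_u^{N-1}=O(\alpha^{N-1})$, because it is the farthest point of the forward recurrence. By~\eqref{eq:qu}, $q_u$ is real and $q_u=O(\alpha)$. This gives the three quantities:
\[
\langle w,v_u\rangle_\alpha=\alpha q_u^{N-1}-\alpha,\qquad
\|w\|_\alpha^2=\alpha^2+\alpha,\qquad
\|v_u\|_\alpha^2=\alpha+\sum_{i=1}^{N-1}q_u^{2i}=\alpha+q_u^2+O(\alpha^4).
\]
Both norms involve only $O(\alpha)$ quantities with explicit leading terms, so the expansion is straightforward.

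Substituting, the alignment is
\[
\mathcal A_\alpha
=\frac{\alpha\,(1-q_u^{N-1})}{\sqrt{\alpha(1+\alpha)}\,\sqrt{\alpha+q_u^2+O(\alpha^4)}}
=\frac{1+O(\alpha^{N-1})}{\sqrt{1+\alpha}\,\sqrt{1+\alpha+O(\alpha^2)}},
\]
using $q_u^2=\alpha^2+O(\alpha^3)$. Expanding the square roots gives $\mathcal A_\alpha=1-\alpha+O(\alpha^2)$, hence $1-\mathcal A_\alpha=O(\alpha)$. The constants are uniform over $U$, because $U$ dropped out at the first step.

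The main point needing care is the weighting. In the $\alpha$-weighted norm, the cut-cell component of $v_u$ contributes only $\alpha$ to $\|v_u\|_\alpha^2$. One must therefore check that the off-cut components $q_u^2+q_u^4+\cdots$ are of strictly smaller order, namely $O(\alpha^2)$, so that they do not compete with the leading $\alpha$ term. This uses $q_u=O(\alpha)$ from~\eqref{eq:qu} together with $N\ge3$, which makes the $c-1$ entry negligible. For the same reason, the $\alpha e_{c-1}$ part of $w$ contributes only a relative $O(\alpha)$ correction. In summary, one factor of $1+O(\alpha)$ comes from $\|w\|_\alpha$ and one from $\|v_u\|_\alpha$, and together they give the claimed bound.
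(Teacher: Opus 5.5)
Your proof is correct and follows essentially the same route as the paper. Both arguments compute $\langle D,v_u\rangle_\alpha$, $\|D\|_\alpha^2=\alpha(1+\alpha)|\Delta|^2$, and $\|v_u\|_\alpha^2=\alpha+O(\alpha^2)$ from $(v_u)_{c-1}=q_u^{N-1}$ and the localization of $v_u$; factoring out $\Delta(U)$ and obtaining the sharper $\mathcal A_\alpha=1-\alpha+O(\alpha^2)$ are cosmetic refinements, though note that the realness of $q_u$ comes from the real-coefficient and uniqueness argument stated just before the theorem, not from \eqref{eq:qu} itself.
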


\begin{proof}
By~\eqref{eq:D},
\[
 D_c=-\Delta,\qquad D_{c-1}=\alpha\Delta.
\]
With $c=0$, Theorem~\ref{thm:loc1d} gives
$(v_u)_{c-1}=q_u^{N-1}=O(\alpha^{N-1})$ and
$\|v_u-e_c\|_2=O(\alpha)$.  Therefore
\begin{align*}
 \langle D,v_u\rangle_\alpha
 &=
 \alpha\Delta\,(v_u)_{c-1}
 -\alpha\Delta\,(v_u)_c\\
 &=
 -\alpha\Delta+O(\alpha^N\Delta),
\\
 \|D\|_\alpha^2
 &=
 \alpha^2|\Delta|^2+\alpha|\Delta|^2
 =
 \alpha(1+\alpha)|\Delta|^2,
\\
 \|v_u\|_\alpha^2
 &=
 \alpha+O(\alpha^2).
\end{align*}
Hence
\[
 \frac{\langle D,v_u\rangle_\alpha}
      {\|D\|_\alpha\|v_u\|_\alpha}
 =-\frac{\Delta}{|\Delta|}+O(\alpha),
\]
and taking the absolute value gives~\eqref{eq:align}.
\end{proof}

The theorem determines only the asymptotic direction of the SRD correction:
the correction and the localized eigenvector approach the same one-dimensional
subspace in the volume-weighted geometry.  Eigenvalue motion and spectral
stability are determined next from the blended operator itself.

\subsection{Local redistribution threshold}
\label{sec:criterion}

Define the blended update operator
\begin{equation}
    A_s=(1-s)A+sA_{\mathcal R},
    \qquad 0\le s\le1.
    \label{eq:As}
\end{equation}
Thus $s=0$ is the base upwind update and $s=1$ is full cell merging.

We now use the case with fixed cut-cell CFL number.  Fix
\[
    L:=\lambda_\alpha>0,
    \qquad
    \lambda=\alpha L,
\]
and let $\alpha\to0$ with $L$ fixed.  Set $c=0$.  For an eigenvector
$A_sv=\mu v$, the rows $i=1,\ldots,N-2$ are unchanged by SRD and satisfy
\begin{equation}
    v_i=q^i v_0,
    \qquad
    \mu(q)=1-\alpha L+\frac{\alpha L}{q}.
    \label{eq:muq}
\end{equation}
Normalize $v_0=1$ and write $w=v_{N-1}$.  Applying the base operator to the
eigenvector, define the provisional vector $\widehat v:=Av$.  The two
provisional components entering the merge are
\begin{equation}
    \widehat v_{N-1}
    =
    \alpha L q^{N-2}+(1-\alpha L)w,
    \qquad
    \widehat v_0=Lw+1-L.
    \label{eq:vhat-two}
\end{equation}
The corresponding merged neighborhood value is
\[
    \widehat Q_0
    =
    \frac{\widehat v_{N-1}+\alpha\widehat v_0}{1+\alpha}.
\]
The two modified eigenvalue equations are therefore
\[
    \mu w=(1-s)\widehat v_{N-1}+s\widehat Q_0,
    \qquad
    \mu=(1-s)\widehat v_0+s\widehat Q_0.
\]
Eliminating $w$ gives
\begin{align}
G_N(q;\alpha,L,s)
={}&s q\Bigl[
L\alpha^2(q-1)
+L\alpha(2q^{N-1}-q^{N-2}-1)
+L(q^{N-1}-1) \notag\\
&\hspace{2.8cm}
+\alpha(1-q)+(1-q^{N-1})
\Bigr] \notag\\
&-L(1+\alpha)\bigl[\alpha(q-1)+q^N-q\bigr].
\label{eq:GN}
\end{align}
For $\alpha>0$, the $N$ roots of $G_N=0$ correspond to the $N$ eigenvalues
of $A_s$ through~\eqref{eq:muq}.  At $\alpha=0$,
\begin{equation}
    G_N(q;0,L,s)
    =
    q(1-q^{N-1})\bigl[L-(L-1)s\bigr].
    \label{eq:GN0}
\end{equation}
Thus one branch emerges from $q=0$ and the other $N-1$ branches emerge from
the $(N-1)$st roots of unity.

\begin{theorem}[Local redistribution threshold]
\label{thm:criterion}
Fix $N\ge4$ and $L>2$.  Define
\[
    s_0(L)=\frac{L-2}{L-1},
    \qquad
    q_-(\alpha,L)=\frac{\alpha L}{\alpha L-2},
    \qquad
    r_-=q_-^{N-2}.
\]
For all sufficiently small $\alpha>0$, the localized eigenvalue branch of
$A_s$ equals $-1$ at a unique value of $s$ in a neighborhood of $s_0(L)$,
namely
\begin{equation}
 s_{-1}^{(N)}(\alpha,L)
 =
 \frac{(1+\alpha)
 \left[L^2\alpha r_- -L^2\alpha+2L\alpha+2L-4\right]}
 {L^2\alpha^2 r_- -L^2\alpha^2
 +L^2\alpha r_- -L^2\alpha
 +L\alpha r_-+2L\alpha^2+3L\alpha+2L-2\alpha-2}.
 \label{eq:sNexact}
\end{equation}
As $\alpha\to0$ with $L$ fixed,
\begin{equation}
 s_{-1}^{(N)}(\alpha,L)
 =
 \frac{L-2}{L-1}
 -
 \frac{L(L-2)}{2(L-1)^2}\alpha^2
 +O(\alpha^3).
 \label{eq:sNexp}
\end{equation}
Moreover, there exist $\alpha_0>0$ and a neighborhood $\mathcal I$ of
$s_0(L)$ such that for $0<\alpha<\alpha_0$ and $s\in\mathcal I$,
\begin{equation}
    \rho(A_s)\le1
    \quad\Longleftrightarrow\quad
    s\ge s_{-1}^{(N)}(\alpha,L).
    \label{eq:local-equivalence}
\end{equation}
\end{theorem}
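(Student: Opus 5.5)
The plan is to impose the crossing condition $\mu=-1$ directly on the characteristic function~\eqref{eq:GN}, solve for $s$, and then handle the remaining branches by perturbation from the $\alpha=0$ factorization~\eqref{eq:GN0}.

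\textbf{Step 1: the $s$-formula.} By~\eqref{eq:muq}, $\mu=-1$ is equivalent to $\alpha L/q=\alpha L-2$, that is, $q=q_-(\alpha,L)$. Since $q_-=O(\alpha)$, this value lies on the branch emerging from $q=0$ in~\eqref{eq:GN0}. The function $G_N$ is affine in $s$:
\[
 G_N=s\,q\,B_N(q;\alpha,L)-L(1+\alpha)\,C_N(q;\alpha),
\]
where $B_N$ denotes the bracket in~\eqref{eq:GN} and $C_N=\alpha(q-1)+q^N-q$. Setting $G_N(q_-)=0$ therefore gives the unique solution
\[
 s=\frac{L(1+\alpha)C_N(q_-)}{q_-B_N(q_-)},
\]
whenever $B_N(q_-)\neq0$. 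I would then express $q_-^{N-1}=q_-r_-$ and $q_-^{N}=q_-^2r_-$, substitute $q_-=\alpha L/(\alpha L-2)$, and clear the factor $(\alpha L-2)$. This should reproduce~\eqref{eq:sNexact}; it is a routine but careful algebraic check. Uniqueness near $s_0$ is immediate, because the equation is linear in $s$.

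\textbf{Step 2: the expansion.} Because $N\ge4$, we have $r_-=q_-^{N-2}=O(\alpha^2)$. In fact $q_-=-\tfrac{\alpha L}{2}+O(\alpha^2)$, so $r_-$ enters~\eqref{eq:sNexact} only through terms such as $L^2\alpha r_-=O(\alpha^3)$. Dropping these terms leaves
\[
 \frac{(1+\alpha)(2L-4+2L\alpha-L^2\alpha)}{2L-2+(3L-2)\alpha+(2L-L^2)\alpha^2}+O(\alpha^3).
\]
Expanding this to second order in $\alpha$ should show that the $O(\alpha)$ terms cancel and the $\alpha^2$ coefficient equals $-L(L-2)/(2(L-1)^2)$. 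This confirms~\eqref{eq:sNexp}. It also shows that the denominator is nonzero for small $\alpha$, which justifies Step 1.

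\textbf{Step 3: the equivalence~\eqref{eq:local-equivalence}.} This is the main obstacle. I would proceed in three parts.

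\emph{(a) The localized branch.} For $s$ in a compact neighborhood $\mathcal I$ of $s_0$, the factor $L-(L-1)s$ in~\eqref{eq:GN0} stays away from zero. Hence $q=0$ is a simple root of $G_N(\cdot;0,L,s)$. The implicit function theorem, together with Rouch\'e's theorem as in the proof of Theorem~\ref{thm:eigenvalue}, then gives a unique root $q_u(\alpha,s)=O(\alpha)$, which is real by conjugation symmetry. By~\eqref{eq:muq}, this root corresponds to a real $\mu_u(\alpha,s)$ with $\mu_u(0,s)=1-L+Lq_1'(s)$.

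To find $\mu_u(0,s)$, write $q=\alpha q_1+\dots$ and match the $O(\alpha)$ terms in $G_N$. I expect this to give
\[
 \mu_u(0,s)=\frac{L(1-s)}{L-(L-1)s}\cdot(\cdots),
\]
and more concretely a monotone function of $s$ that equals $-1$ at $s_0$. I would verify strict monotonicity by computing
\[
 \partial_s\mu_u=-\frac{\partial_sG_N}{\partial_qG_N}\cdot\frac{d\mu}{dq}
\]
at the crossing and showing that it is positive. Then, for $s\in\mathcal I$, we get $\mu_u\ge-1$ exactly when $s\ge s_{-1}^{(N)}$. Near $-1$ the branch is far from $+1$, so $|\mu_u|\le1$ is equivalent to $\mu_u\ge-1$.

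\emph{(b) The root $q=1$.} This root is exact for all $s$, since $G_N(1)=0$ (conservation), and gives $\mu=1$.

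\emph{(c) The remaining $N-2$ branches.} These emanate from the nontrivial $(N-1)$st roots of unity $q_k$. Each is a simple root in~\eqref{eq:GN0}, so it perturbs analytically: $q_k(\alpha,s)=q_k+O(\alpha)$, uniformly for $s\in\mathcal I$. Then~\eqref{eq:muq} gives
\[
 |\mu_k|^2=1+2\alpha L(\cos\varphi_k-1)+O(\alpha^2)<1,
\]
exactly as in Corollary~\ref{cor:threshold}. Uniformity in $s$ over the compact set $\mathcal I$ gives a single $\alpha_0$ that works for all these branches. Combining (a)–(c) yields~\eqref{eq:local-equivalence}.
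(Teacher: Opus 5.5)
Your decomposition matches the paper's proof. You solve the equation $G_N(q_-)=0$, which is affine in $s$, expand it using $r_-=O(\alpha^2)$, and then split the spectrum into three parts: the exact root $q=1$, the $N-2$ perturbed roots of unity (handled as in Corollary~\ref{cor:threshold}), and the branch issuing from $q=0$. Your uniqueness argument via linearity in $s$ is sound, because for each $s$ near $s_0$ there is only one root in $|q|<C\alpha$. Two steps fail as written, however.

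\textbf{Step 2.} You dropped the $-L^2\alpha$ term from the denominator. After discarding the $r_-$ terms, the denominator of \eqref{eq:sNexact} is $2(L-1)-(L-1)(L-2)\alpha-L(L-2)\alpha^2$, not $2L-2+(3L-2)\alpha+(2L-L^2)\alpha^2$. With your version the $O(\alpha)$ terms do not cancel, so the proposed expansion would not give \eqref{eq:sNexp}. With the correct denominator, take the numerator $(L-2)(1+\alpha)(2-L\alpha)$. Numerator and denominator then both have relative linear coefficient $-(L-2)/2$, and the ratio is $\frac{L-2}{L-1}\bigl(1-\frac{L}{2(L-1)}\alpha^2\bigr)+O(\alpha^3)$, as claimed.

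\textbf{Step 3(a).} The decisive computation is left as an expectation, and what you display is wrong.
\begin{itemize}
\item By \eqref{eq:muq}, $q=\alpha z+O(\alpha^2)$ gives $\mu_u=1+L/z+O(\alpha)$, not $1-L+Lq_1'(s)$.
\item The guessed factor $L(1-s)/(L-(L-1)s)$ is not what the matching produces.
\item Carrying out the matching you describe: $B_N(\alpha z)=(1-L)+O(\alpha)$ and $C_N(\alpha z)=-\alpha(1+z)+O(\alpha^2)$. Hence $G_N(\alpha z)/\alpha=-s(L-1)z+L(1+z)+O(\alpha)$, so $z=-L/(L-(L-1)s)$ and
\[
 \mu_u(\alpha,s)=-(L-1)(1-s)+O(\alpha).
\]
\end{itemize}
The paper's argument rests on this formula. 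It is affine in $s$ with slope $L-1>0$ and equals $-1$ exactly at $s_0$.

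From here either of two routes gives \eqref{eq:local-equivalence}:
\begin{itemize}
\item The rescaled equation has $z$-derivative $L-(L-1)s\neq0$ at $\alpha=0$. The implicit function theorem then makes $z(\alpha,s)$ smooth jointly, so $\partial_s\mu_u=L-1+O(\alpha)$ uniformly on $\mathcal I$.
\item Your implicit-differentiation formula, evaluated at the crossing, gives to leading order $-L(L-1)/\bigl(z(L-(L-1)s)\bigr)=L-1$. Combined with the Step 1 uniqueness of the crossing, this also suffices.
\end{itemize}
Either way, $z$ must actually be computed. Without it, the monotonicity of $\mu_u$ and the location of the crossing near $s_0$ are only asserted.
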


\begin{proof}
Setting $\mu=-1$ in~\eqref{eq:muq} gives
\[
    q=q_-(\alpha,L)=\frac{\alpha L}{\alpha L-2}=O(\alpha).
\]
Substitution into~\eqref{eq:GN} yields an equation affine in $s$; solving it
gives~\eqref{eq:sNexact}.  Because $N\ge4$,
$r_-=O(\alpha^{N-2})=O(\alpha^2)$, and expansion of the rational expression
gives~\eqref{eq:sNexp}.  In particular, the coefficient of $\alpha$ vanishes.

It remains to show that this crossing is the local redistribution boundary
of the full periodic matrix.  At $s=s_0(L)$, equation~\eqref{eq:GN0} has one simple root at
$q=0$ and $N-1$ simple roots
\[
    q_k=e^{2\pi i k/(N-1)},\qquad k=0,\ldots,N-2.
\]
The root $q_0=1$ persists exactly for every $\alpha$ and $s$ because
$A_s\mathbf 1=\mathbf 1$, where $\mathbf 1=(1,\ldots,1)^\top$; it is the
conservation eigenvalue $\mu=1$.

For each $q_k\ne1$, the implicit-function theorem gives a nearby root
$q_k(\alpha,s)=q_k+O(\alpha)$, uniformly for $s$ in a sufficiently small
neighborhood of $s_0$.  Using~\eqref{eq:muq},
\[
    \mu_k
    =
    1+\alpha L(q_k^{-1}-1)+O(\alpha^2),
\]
and therefore, with $q_k=e^{i\varphi_k}$,
\[
    |\mu_k|^2
    =
    1+2\alpha L(\cos\varphi_k-1)+O(\alpha^2)<1
\]
for sufficiently small $\alpha$.  Thus every nonconstant regular branch is
strictly inside the unit disk near the crossing.

For the branch issuing from $q=0$, set $q=\alpha z$ in~\eqref{eq:GN}.  The
leading equation is
\[
    z=-\frac{L}{L-(L-1)s},
\]
and~\eqref{eq:muq} gives
\[
    \mu_u(\alpha,s)
    =
    -(L-1)(1-s)+O(\alpha),
    \qquad
    \partial_s\mu_u=L-1+O(\alpha)>0
\]
near $s_0$.  The exact value~\eqref{eq:sNexact} belongs to this branch because
$q_-=O(\alpha)$, and there $\mu_u=-1$.  Hence the localized eigenvalue enters
the unit disk as $s$ increases through $s_{-1}^{(N)}$.  The regular
nonconstant branches are already strictly inside, and the constant branch
remains at $1$.  This proves~\eqref{eq:local-equivalence}.
\end{proof}

\begin{corollary}[Leading-order redistribution threshold]
\label{cor:s0practical}
For fixed $L>2$ and $N\ge4$,
\[
 s_0(L)-s_{-1}^{(N)}(\alpha,L)
 =
 \frac{L(L-2)}{2(L-1)^2}\alpha^2+O(\alpha^3)>0
\]
for sufficiently small $\alpha$, where
\begin{equation}
    s_0(L)=\frac{L-2}{L-1}.
    \label{eq:smodel}
\end{equation}
Hence $s=s_0(L)$ is asymptotically sufficient for the full periodic model and
exceeds the exact local eigenvalue-crossing threshold only by $O(\alpha^2)$.  For fixed
$0<L<2$, Corollary~\ref{cor:threshold} shows that the unstabilized operator
$s=0$ is already spectrally stable for sufficiently small $\alpha$.
\end{corollary}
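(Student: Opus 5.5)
The corollary follows from Theorem~\ref{thm:criterion} and Corollary~\ref{cor:threshold}, with no new spectral analysis. I would take the expansion~\eqref{eq:sNexp} and subtract it from $s_0(L)=(L-2)/(L-1)$. This gives
\[
 s_0(L)-s_{-1}^{(N)}(\alpha,L)=\frac{L(L-2)}{2(L-1)^2}\alpha^2+O(\alpha^3).
\]
For fixed $L>2$ the coefficient $L(L-2)/(2(L-1)^2)$ is a strictly positive constant, so the remainder $O(\alpha^3)$ is dominated once $\alpha$ is small. Hence the difference is positive for all sufficiently small $\alpha>0$.

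For sufficiency I would use the local equivalence~\eqref{eq:local-equivalence}. There are $\alpha_0>0$ and a fixed neighborhood $\mathcal I$ of $s_0(L)$ such that, for $0<\alpha<\alpha_0$ and $s\in\mathcal I$, $\rho(A_s)\le1$ holds exactly when $s\ge s_{-1}^{(N)}(\alpha,L)$. The point $s=s_0(L)$ lies in $\mathcal I$ trivially. The positivity just established gives $s_0(L)>s_{-1}^{(N)}(\alpha,L)$, so~\eqref{eq:local-equivalence} yields $\rho(A_{s_0(L)})\le1$ for all sufficiently small $\alpha$. The excess over the exact crossing is the $O(\alpha^2)$ quantity computed above. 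I would note explicitly that $\mathcal I$ does not shrink with $\alpha$, which the theorem supplies; this is what makes it legitimate to evaluate at the fixed point $s_0(L)$.

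For the final sentence, with $0<L<2$, I would fix $\varepsilon=\min(L,2-L)/2$. Then $L\in(0,2-\varepsilon]$, and Corollary~\ref{cor:threshold} gives spectral stability of the base operator $A=A_0$ for small $\alpha$.

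The only delicate step is the positivity claim: the leading-order terms of $s_0$ and $s_{-1}^{(N)}$ cancel, and the $O(\alpha)$ term of $s_{-1}^{(N)}$ vanishes, so the sign comes entirely from the $\alpha^2$ coefficient. That depends on $N\ge4$, which gives $r_-=O(\alpha^2)$ in the exact formula~\eqref{eq:sNexact}. If I wanted to double-check this coefficient rather than cite~\eqref{eq:sNexp}, I would expand~\eqref{eq:sNexact} directly. Setting $r_-=0$ up to $O(\alpha^{2})$ corrections (where $r_-$ enters multiplied by $\alpha$, its contribution is $O(\alpha^3)$), the numerator becomes $(1+\alpha)(2L-4+\alpha(2L-L^2))$ and the denominator becomes $2L-2+\alpha(3L-L^2-2)+\alpha^2(2L-L^2)$. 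A second-order Taylor expansion of this quotient then confirms the stated coefficient.
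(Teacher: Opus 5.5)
Your proposal is correct and follows the same route the paper intends. The paper gives no separate proof of this corollary; it treats it as an immediate consequence of the expansion \eqref{eq:sNexp}, the local equivalence \eqref{eq:local-equivalence} evaluated at the fixed point $s_0(L)\in\mathcal I$, and Corollary~\ref{cor:threshold} for $0<L<2$, which is exactly what you do. Your optional direct check of the $\alpha^2$ coefficient from \eqref{eq:sNexact} is also correct: for $N\ge4$ the $r_-$ terms contribute only $O(\alpha^3)$, and the reduced quotient expands to the stated coefficient.
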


The exact expression \eqref{eq:sNexact} is useful for verifying the finite-$\alpha$
crossing of the model operator, but it is intentionally not used as the
higher-order prescription.  It depends on the specific first-order,
one-cut-cell periodic operator and on $N$, whereas the higher-order benchmark
changes the reconstruction, time integrator, and number of cut cells.  Using
$s_0(L)$ avoids assigning specific to the model finite-$\alpha$ precision to that
transfer.  For reference, at $N=150$ and $L=4$,
$s_0-s_{-1}^{(150)}$ is approximately $0.0050$ for $\alpha=0.10$ and
$0.0122$ for $\alpha=0.15$.

The two statements meet at the limiting boundary $L=2$, but the theorem does
not assert that $s=0$ is spectrally stable there for every finite $\alpha$.
For $L>2$, $s_0(L)$ increases monotonically from $0$ and approaches $1$ as
$L\to\infty$.  Thus the model eigenvalue-crossing threshold can be substantially below
the full-merging value $s=1$ for moderate $L$, while
$s_0(L)\to1$ as $L\to\infty$.

\begin{remark}[redistribution condition versus power boundedness]
Theorem~\ref{thm:criterion} establishes the local condition $\rho(A_s)\le1$
threshold, meaning $\rho(A_s)\le1$.  The matrices $A_s$ are generally nonnormal, so this condition alone does not
imply power boundedness, i.e., $\sup_{n\ge0}\|A_s^n\|<\infty$ in a fixed
matrix norm, and it does not exclude finite transient amplification.
The theorem is also local in $s$ and asymptotic in $\alpha$ at fixed $L$; it
does not prove that every $s\ge s_{-1}^{(N)}(\alpha,L)$ is spectrally stable
for arbitrary finite $\alpha$, nor does it transfer the same threshold to
other fluxes, reconstructions, nonlinear systems, or multidimensional
redistribution rules.  Section~\ref{sec:numerics} therefore reports
finite-time amplification separately from the spectral crossing.
\end{remark}

\begin{figure}[ht]
\centering
\includegraphics[width=\textwidth]{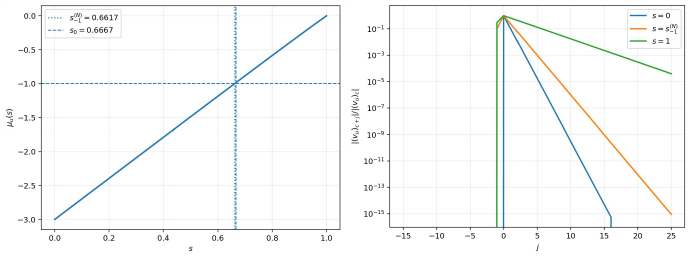}
\caption{Localized spectral branch and eigenvector for $N=150$,
$\alpha=0.10$, and $L=4$.  \textit{Left}: $\mu_u(s)$ with
$\mu_u(s_{-1}^{(150)})=-1$, where
$s_{-1}^{(150)}\approx0.6617$ and $s_0=2/3\approx0.6667$.
\textit{Right}: $|(v_u)_{c+j}|/|(v_u)_c|$ at $s=0$,
$s=s_{-1}^{(150)}$, and $s=1$, where $j$ is a signed periodic offset and
$c+j$ is interpreted modulo $N$.}
\label{fig:spectral_mechanism}
\end{figure}

Figure~\ref{fig:spectral_mechanism} displays the two quantities entering
Theorem~\ref{thm:criterion}: the branch $\mu_u(s)$ and its normalized
eigenvector.  The mode is already localized at $s=0$.  Increasing $s$ moves
$\mu_u(s)$ toward the unit circle, with
$\mu_u(s_{-1}^{(N)})=-1$.  The distinction between
$s_{-1}^{(N)}$ and $s_0$ is the finite-$\alpha$ correction quantified by
\eqref{eq:sNexp}.

\section{Block-Matrix Analysis of Cut-Cell Operators with Diverging Coefficients}
\label{sec:singular}

The preceding calculation has one singular row.  We now isolate the part of
the argument that does not depend on the one-dimensional upwind stencil.
The section has two steps.  First, Theorem~\ref{thm:singular} identifies the
unbounded eigenvalue cluster from the matrix $\Gamma$ in the leading cut-cell block.
Second, Theorem~\ref{thm:proj} shows that the associated invariant subspace
concentrates on the cut-cell coordinates.  The later cut-cell results apply
these two statements with concrete finite-volume blocks.

Let $A_\alpha\in\mathbb C^{N\times N}$ be a family of matrices.  After
reordering the unknowns, suppose the first $m$ coordinates represent geometric
cut-cells and the remaining $N-m$ coordinates are regular.  Write
\begin{equation}
 A_\alpha
 =
 \begin{pmatrix}
     -\alpha^{-p}\Gamma+R_\alpha & B_\alpha\\
     C_\alpha & Q_\alpha
 \end{pmatrix},
 \qquad p>0.
 \label{eq:singular_block}
\end{equation}
The matrix $\Gamma\in\mathbb C^{m\times m}$ is the leading coefficient
of the leading cut-cell block.  The cut-cell coordinate subspace is
\[
    \mathcal C=\mathbb C^m\oplus\{0\}.
\]
Throughout this section, $I_k$ denotes the $k\times k$ identity matrix,
$I$ denotes an identity matrix whose dimension is clear from context, and
$\|\cdot\|$ denotes any fixed subordinate matrix norm.

\begin{assumption}[Asymptotic cut-cell block structure]
\label{ass:sb}
As $\alpha\to0$,
\[
    \|\alpha^pR_\alpha\|\to0,\qquad
    \|B_\alpha\|=O(\alpha^{-p}),\qquad
    \|C_\alpha\|=O(1),\qquad
    \|Q_\alpha\|=O(1),
\]
and $\Gamma$ is invertible.  The dimensions $m$ and $N$ are fixed.
\end{assumption}

The upper-right block may have the same asymptotic order as the leading cut-cell block.
This is needed for conservative cut-cell methods because a cut-cell row with coefficients that diverge as $\alpha\to0$ contains
both a large diagonal coefficient and large coefficients multiplying its
regular neighbors.  The lower blocks remain bounded because regular-cell
updates are not divided by a vanishing volume.

\begin{theorem}[Spectrum generated by the leading cut-cell block]
\label{thm:singular}
Under Assumption~\ref{ass:sb}, $A_\alpha$ has exactly $m$ unbounded
eigenvalues, counted with algebraic multiplicity.  Let
$\gamma_1,\ldots,\gamma_m$ denote the eigenvalues of $\Gamma$, repeated
according to algebraic multiplicity.  The unbounded eigenvalues of $A_\alpha$
can be labeled $\mu_1(\alpha),\ldots,\mu_m(\alpha)$ so that
\begin{equation}
    \alpha^p\mu_k(\alpha)\longrightarrow-\gamma_k,
    \qquad k=1,\ldots,m.
    \label{eq:scaled-spectrum}
\end{equation}
All remaining eigenvalues are bounded.  If, in addition, $\Gamma$ is
diagonalizable and $R_\alpha=O(1)$, then the unbounded eigenvalues may be
matched with the eigenvalues $\gamma_k$ of $\Gamma$ so that
\begin{equation}
    \mu_k(\alpha)
    =
    -\frac{\gamma_k}{\alpha^p}+O(1).
    \label{eq:large-O1}
\end{equation}
\end{theorem}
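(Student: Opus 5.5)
The plan is to work with the scaled matrix $\alpha^pA_\alpha$ and Schur complements. The block $\alpha^pB_\alpha$ is only bounded, not convergent, so I will avoid taking a limit of it. Instead I will use the factorization
\[
\det(zI-\alpha^pA_\alpha)=\det\bigl(zI+\Gamma-\alpha^pR_\alpha\bigr)\,
\det\Bigl(zI-\alpha^pQ_\alpha-\alpha^{2p}C_\alpha\bigl(zI+\Gamma-\alpha^pR_\alpha\bigr)^{-1}B_\alpha\Bigr).
\]
Fix a large circle $|z|=\rho_0$ with $\rho_0>\|\Gamma\|+1$. On this circle the inverse in the second factor is uniformly bounded for small $\alpha$. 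Moreover $\alpha^{2p}\|C_\alpha\|\|B_\alpha\|=O(\alpha^p)$ and $\alpha^p\|Q_\alpha\|\to0$. Hence the characteristic polynomial of $\alpha^pA_\alpha$ converges uniformly on the circle to $\det(zI+\Gamma)\,z^{N-m}$. The Cauchy integral formula then gives coefficientwise convergence.

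By continuity of polynomial roots, $m$ eigenvalues of $\alpha^pA_\alpha$ converge to $-\gamma_1,\dots,-\gamma_m$, with multiplicity. These limits are nonzero because $\Gamma$ is invertible. The remaining $N-m$ scaled eigenvalues converge to $0$. This gives \eqref{eq:scaled-spectrum} for the first group, and shows that those eigenvalues of $A_\alpha$ are unbounded.

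Next I will show that the other $N-m$ eigenvalues are genuinely bounded, not merely $o(\alpha^{-p})$. Let $\mu$ be an eigenvalue with $\alpha^p\mu\to0$. Then
\[
-\alpha^{-p}\Gamma+R_\alpha-\mu I=-\alpha^{-p}\bigl(\Gamma-\alpha^pR_\alpha+\alpha^p\mu I\bigr)
\]
is invertible with inverse of norm $O(\alpha^p)$. By the Schur complement on the top block, $\mu$ is an eigenvalue of
\[
Q_\alpha-C_\alpha\bigl(-\alpha^{-p}\Gamma+R_\alpha-\mu I\bigr)^{-1}B_\alpha .
\]
Its norm is at most $O(1)+O(1)\cdot O(\alpha^p)\cdot O(\alpha^{-p})=O(1)$, with a constant independent of $\mu$. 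So $|\mu|$ is uniformly bounded, and exactly $m$ eigenvalues are unbounded.

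For the refinement \eqref{eq:large-O1}, take $\Gamma=V\,\mathrm{diag}(\gamma_k)V^{-1}$ and $R_\alpha=O(1)$. For an unbounded eigenvalue $\mu$ we have $|\mu|\sim\alpha^{-p}$, so $Q_\alpha-\mu I$ is invertible with inverse $O(\alpha^p)$. The Schur complement on the lower block then shows that $\mu$ is an eigenvalue of the frozen matrix
\[
-\alpha^{-p}\Gamma+E_\alpha(\mu),\qquad E_\alpha(\mu)=R_\alpha+B_\alpha(\mu I-Q_\alpha)^{-1}C_\alpha ,
\]
where $\|E_\alpha(\mu)\|=O(1)+O(\alpha^{-p})\,O(\alpha^p)\,O(1)=O(1)$. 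Bauer--Fike applied to this fixed matrix gives
\[
\min_k\bigl|\mu+\gamma_k\alpha^{-p}\bigr|\le\kappa(V)\,\|E_\alpha(\mu)\|=O(1).
\]

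The step I expect to be the main obstacle is the matching with multiplicities. Bauer--Fike localizes each $\mu$ near some $-\gamma_k\alpha^{-p}$ but does not count how many eigenvalues land near each one. I will resolve this using the first part. For each distinct value $\gamma$ of multiplicity $m_\gamma$, the scaled-limit counting already places exactly $m_\gamma$ eigenvalues with $\alpha^p\mu\to-\gamma$. Distinct $\gamma$'s are separated at scale $\alpha^{-p}$, while the Bauer--Fike discs have radius $O(1)$. So those $m_\gamma$ eigenvalues must lie in the $O(1)$ disc around $-\gamma\alpha^{-p}$, which gives the labeling in \eqref{eq:large-O1}.
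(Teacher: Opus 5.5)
Your proof is correct. The refinement step is essentially the paper's: freezing $\mu$ in $E_\alpha(\mu)$ and applying Bauer--Fike is the same estimate as the paper's resolvent bound $\|M_0(\mu)^{-1}\|\le\kappa(V)/\operatorname{dist}(\mu,-\alpha^{-p}\sigma(\Gamma))$ combined with a Neumann series. Your multiplicity matching by scale separation ($\alpha^{-p}$ gaps versus $O(1)$ discs) is also what the paper does.

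Where you genuinely differ is in the counting and in excluding further unbounded eigenvalues.

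The paper eliminates the regular unknowns. It works with $\mathcal S_\alpha(\mu)=\mu I_m+\alpha^{-p}\Gamma-R_\alpha-B_\alpha(\mu I-Q_\alpha)^{-1}C_\alpha$ on $|\mu|>\|Q_\alpha\|+1$ and applies Rouch\'e on a separate contour around each cluster of $-\sigma(\Gamma)$ in $\nu=\alpha^p\mu$. It then excludes extra unbounded eigenvalues by contradiction: a subsequence with $\nu_\alpha\to0$ and $|\mu_\alpha|\to\infty$ would force $\alpha^p\mathcal S_\alpha(\mu_\alpha)\to\Gamma$, which contradicts the invertibility of $\Gamma$.

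You instead eliminate the cut-cell unknowns on a single large circle. This shows that the whole characteristic polynomial of $\alpha^pA_\alpha$ converges to $z^{N-m}\det(zI+\Gamma)$, so continuity of roots gives all $N$ scaled eigenvalues at once. For the $N-m$ small ones, you eliminate the cut-cell block again where $|\alpha^p\mu|$ is small. That turns boundedness into a direct estimate $|\mu|\le\|Q_\alpha\|+O(\alpha^p)\,\|\Gamma^{-1}\|\,\|C_\alpha\|\,\|B_\alpha\|=O(1)$, with a constant that depends only on the block bounds.

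What each route buys:
\begin{itemize}
\item Your route is global and quantitative. It never needs the restriction $|\mu|>\|Q_\alpha\|+1$ and avoids the subsequence argument.
\item The paper's route uses one object, $\mathcal S_\alpha$, for counting, exclusion and refinement. Its contour-plus-Schur-complement setup closely parallels the Riesz-projector proof of Theorem~\ref{thm:proj} that follows.
\end{itemize}
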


\begin{proof}
For $|\mu|>\|Q_\alpha\|+1$, Schur complementation gives
\[
 \det(\mu I-A_\alpha)
 =
 \det(\mu I-Q_\alpha)\det \mathcal S_\alpha(\mu),
\]
where
\[
 \mathcal S_\alpha(\mu)
 =
 \mu I_m+\alpha^{-p}\Gamma-R_\alpha
 -
 B_\alpha(\mu I-Q_\alpha)^{-1}C_\alpha.
\]
Set $\nu=\alpha^p\mu$.  Uniformly for $\nu$ in compact subsets of
$\mathbb C\setminus\{0\}$,
\[
 \alpha^p\mathcal S_\alpha(\nu/\alpha^p)
 =
 \nu I_m+\Gamma+o(1).
\]
Indeed, $\alpha^pR_\alpha\to0$, and
\[
 \alpha^p
 B_\alpha(\nu\alpha^{-p}I-Q_\alpha)^{-1}C_\alpha
 =
 O(\alpha^p).
\]
Choose disjoint contours enclosing the distinct eigenvalue clusters of
$-\Gamma$ and excluding $0$.  Uniform convergence of the determinant on each
contour and Rouch\'e's theorem preserve the algebraic number of roots inside
each contour.  Hence there are exactly $m$ such eigenvalues, counted with algebraic
multiplicity, and they can be labeled so that
$\alpha^p\mu_k(\alpha)\to-\gamma_k$.

Suppose a further eigenvalue were unbounded.  Since
$\|A_\alpha\|=O(\alpha^{-p})$, after passing to a subsequence its scaled value
$\nu_\alpha=\alpha^p\mu_\alpha$ has a finite limit.  A nonzero limit would
belong to $-\sigma(\Gamma)$ and is already counted.  If
$\nu_\alpha\to0$ while $|\mu_\alpha|\to\infty$, then
\[
    \alpha^p\mathcal S_\alpha(\mu_\alpha)\longrightarrow\Gamma,
\]
which is invertible, a contradiction.  Thus the remaining spectrum is bounded.

For the final refinement, assume that $\Gamma$ is diagonalizable and
$R_\alpha=O(1)$.  On the unbounded spectral scale, the Schur equation can be
written
\[
 \det\!\left(\mu I_m+\alpha^{-p}\Gamma+H_\alpha(\mu)\right)=0,
 \qquad
 H_\alpha(\mu)
 =-R_\alpha-B_\alpha(\mu I-Q_\alpha)^{-1}C_\alpha,
\]
and $\|H_\alpha(\mu)\|\le C$ uniformly whenever
$|\mu|\asymp\alpha^{-p}$.  Write $\Gamma=V\Lambda V^{-1}$ with $\Lambda$ diagonal and let
$\kappa(V)=\|V\|\,\|V^{-1}\|$.  For
$M_0(\mu)=\mu I_m+\alpha^{-p}\Gamma$,
\[
    \|M_0(\mu)^{-1}\|
    \le
    \frac{\kappa(V)}
    {\operatorname{dist}(\mu,-\alpha^{-p}\sigma(\Gamma))}.
\]
Here $\operatorname{dist}(z,S):=\inf_{w\in S}|z-w|$ denotes the distance
from a complex number $z$ to a set $S\subset\mathbb C$.
Hence, if the distance in the denominator exceeds $\kappa(V)C$, then
$\|M_0(\mu)^{-1}H_\alpha(\mu)\|<1$ and the Neumann-series criterion
shows that
$M_0(\mu)+H_\alpha(\mu)$ is invertible.  Every unbounded eigenvalue must
therefore lie within an $O(1)$ neighborhood of
$-\alpha^{-p}\sigma(\Gamma)$.  Combined with the algebraic multiplicity count
already established for each cluster, this yields~\eqref{eq:large-O1}.
\end{proof}

\begin{theorem}[Convergence of the cut-cell invariant subspace]
\label{thm:proj}
Under Assumption~\ref{ass:sb}, let $\mathcal U_\alpha$ be the invariant subspace
associated with the $m$ unbounded eigenvalues.  Then
\begin{equation}
    \|P_{\mathcal U_\alpha}-P_{\mathcal C}\|=O(\alpha^p),
    \label{eq:proj}
\end{equation}
where $P_{\mathcal V}$ denotes the Euclidean orthogonal projector onto a
subspace $\mathcal V$.
\end{theorem}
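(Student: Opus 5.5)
The plan is to represent $\mathcal U_\alpha$ as the graph of a small matrix over $\mathcal C$ and to obtain that matrix from a fixed-point form of the associated Riccati equation. Concretely, I will look for an invariant subspace of the form
\[
  \mathcal U_\alpha=\operatorname{range}\begin{pmatrix} I_m\\ X_\alpha\end{pmatrix},
  \qquad X_\alpha\in\mathbb C^{(N-m)\times m}.
\]
Requiring $A_\alpha\binom{I}{X}=\binom{I}{X}K$ forces $K=-\alpha^{-p}\Gamma+R_\alpha+B_\alpha X$ from the first block row. The second block row then gives $C_\alpha+Q_\alpha X=XK$. Multiplying by $\alpha^p$ and writing $\widetilde\Gamma_\alpha(X):=\Gamma-\alpha^pR_\alpha-\alpha^pB_\alpha X$, this becomes
\[
  X\,\widetilde\Gamma_\alpha(X)=-\alpha^p C_\alpha-\alpha^p Q_\alpha X .
\]
By Assumption~\ref{ass:sb}, $\alpha^pR_\alpha\to0$ and $\alpha^pB_\alpha=O(1)$. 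Hence, for $\|X\|$ small, $\widetilde\Gamma_\alpha(X)$ is a small perturbation of the invertible $\Gamma$, and the equation can be rewritten as the fixed-point problem
\[
  X=\Phi_\alpha(X):=-\alpha^p\bigl(C_\alpha+Q_\alpha X\bigr)\widetilde\Gamma_\alpha(X)^{-1}.
\]

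The key step is to show that $\Phi_\alpha$ is a contraction on the ball $\|X\|\le K\alpha^p$ for a suitable constant $K$ and all small $\alpha$.
\begin{itemize}
\item For $\|X\|\le K\alpha^p$, the perturbation $\alpha^pR_\alpha+\alpha^pB_\alpha X$ is $o(1)$, so $\|\widetilde\Gamma_\alpha(X)^{-1}\|\le 2\|\Gamma^{-1}\|$ by a Neumann series.
\item It follows that $\|\Phi_\alpha(X)\|\le 2\alpha^p\|\Gamma^{-1}\|\,(\|C_\alpha\|+\|Q_\alpha\|K\alpha^p)$. This is at most $K\alpha^p$ once $K>2\|\Gamma^{-1}\|\sup_\alpha\|C_\alpha\|$ and $\alpha$ is small, so the ball is mapped into itself.
\item For the Lipschitz bound, use the resolvent identity $\widetilde\Gamma(X)^{-1}-\widetilde\Gamma(Y)^{-1}=\widetilde\Gamma(X)^{-1}\alpha^pB_\alpha(X-Y)\widetilde\Gamma(Y)^{-1}$. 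This gives $\|\Phi_\alpha(X)-\Phi_\alpha(Y)\|=O(\alpha^p)\|X-Y\|$, so the map is a contraction.
\end{itemize}
I expect this step to be the main obstacle. The delicate point is that $B_\alpha$ is as large as the diagonal block. The difficulty is resolved because $B_\alpha$ enters only through the product $\alpha^pB_\alpha X$, which is $O(\alpha^p)$ on the ball. Banach's theorem then yields a unique $X_\alpha$ with $\|X_\alpha\|=O(\alpha^p)$, and $\binom{I}{X_\alpha}$ spans an invariant subspace.

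Next I must identify this subspace with $\mathcal U_\alpha$. The spectrum of the restriction to the graph subspace is $\sigma(K)$, where
\[
  K=\alpha^{-p}\bigl(-\Gamma+o(1)\bigr).
\]
By continuity of eigenvalues, all eigenvalues of $K$ are unbounded, with $\alpha^p\sigma(K)\to-\sigma(\Gamma)$. Theorem~\ref{thm:singular} says that $A_\alpha$ has exactly $m$ unbounded eigenvalues, counted with algebraic multiplicity, and that the rest are bounded. The spectral (Riesz) invariant subspace for the unbounded cluster is therefore the unique $m$-dimensional invariant subspace carrying that part of the spectrum. This forces the graph subspace to equal $\mathcal U_\alpha$.

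Finally, for the graph of $X$ over $\mathcal C$ in the Euclidean norm, the standard identity for the gap between subspaces gives
\[
  \|P_{\mathcal U_\alpha}-P_{\mathcal C}\|_2=\frac{\|X_\alpha\|_2}{\sqrt{1+\|X_\alpha\|_2^2}}\le\|X_\alpha\|_2 .
\]
Combined with $\|X_\alpha\|=O(\alpha^p)$ and the equivalence of norms in the fixed dimension $N$, this yields~\eqref{eq:proj}.
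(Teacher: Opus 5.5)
Your proof is correct, but it takes a different route from the paper's.

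\textbf{The paper's route.} The paper works with the scaled matrix $\alpha^pA_\alpha$. It places a fixed contour around $-\sigma(\Gamma)$, using the spectral separation from Theorem~\ref{thm:singular}, and writes the resolvent blockwise through a Schur complement. From this it shows that the Riesz projector satisfies $(\Pi_\alpha)_{21},(\Pi_\alpha)_{22}=O(\alpha^p)$ and $(\Pi_\alpha)_{11}=I_m+o(1)$, and reads off the graph map as $(\Pi_\alpha)_{21}(\Pi_\alpha)_{11}^{-1}$. Identifying the resulting subspace with $\mathcal U_\alpha$ is automatic, because the range of a Riesz projector is the spectral subspace by definition.

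\textbf{Your route.} You solve the Riccati equation $C_\alpha+Q_\alpha X=X(-\alpha^{-p}\Gamma+R_\alpha+B_\alpha X)$ directly, by a contraction on a ball of radius $O(\alpha^p)$. Both arguments rest on the same observation: the large block $B_\alpha$ only appears multiplied by an $O(\alpha^p)$ quantity. In the paper this is $\alpha^{2p}B_\alpha Y_\alpha^{-1}C_\alpha$; in your argument it is $\alpha^pB_\alpha X$.

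\textbf{What each buys.} Your approach avoids contour integrals and resolvent estimates, gives explicit constants, and even yields the leading term $X_\alpha=-\alpha^pC_\alpha\Gamma^{-1}+o(\alpha^p)$. The cost is that you must separately identify the constructed graph with $\mathcal U_\alpha$. You do this correctly by multiplicity counting: the characteristic polynomial of the restricted matrix divides that of $A_\alpha$, and all $m$ of its roots lie in the unbounded cluster.

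\textbf{Optional improvement.} You could make the identification step independent of Theorem~\ref{thm:singular}. Conjugating $A_\alpha$ by $\begin{pmatrix} I&0\\ X_\alpha&I\end{pmatrix}$ gives a block upper-triangular matrix with diagonal blocks $-\alpha^{-p}\Gamma+R_\alpha+B_\alpha X_\alpha$ and $Q_\alpha-X_\alpha B_\alpha$. Since $\|X_\alpha B_\alpha\|=O(1)$, the second block has bounded spectrum. Your construction therefore recovers the eigenvalue count as a by-product rather than consuming it.

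\textbf{Minor.} You use $K$ both for the ball radius and for the restricted matrix; rename one of them.
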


\begin{proof}
Scale the matrix:
\[
 \widetilde A_\alpha:=\alpha^pA_\alpha
 =
 \begin{pmatrix}
     -\Gamma+\alpha^pR_\alpha & \alpha^pB_\alpha\\
     \alpha^pC_\alpha & \alpha^pQ_\alpha
 \end{pmatrix}.
\]
By Theorem~\ref{thm:singular}, the nonzero spectral cluster converges to
$-\sigma(\Gamma)$ and remains separated from the cluster at $0$.  Choose a
fixed positively oriented contour $\mathfrak C$ enclosing
$-\sigma(\Gamma)$ and excluding $0$.  The Riesz projector
\[
    \Pi_\alpha
    =
    \frac{1}{2\pi i}
    \int_{\mathfrak C}
    (zI-\widetilde A_\alpha)^{-1}\,dz
\]
has range $\mathcal U_\alpha$ and rank $m$.

Write
\[
 zI-\widetilde A_\alpha
 =
 \begin{pmatrix}
 X_\alpha(z) & -\alpha^pB_\alpha\\
 -\alpha^pC_\alpha & Y_\alpha(z)
 \end{pmatrix},
\]
where
\[
 X_\alpha(z)=zI+\Gamma-\alpha^pR_\alpha,
 \qquad
 Y_\alpha(z)=zI-\alpha^pQ_\alpha.
\]
Since $\mathfrak C$ stays a positive distance from $0$,
$Y_\alpha(z)^{-1}=O(1)$ uniformly on $\mathfrak C$.  The Schur complement
\[
 \mathcal S_\alpha(z)
 =
 X_\alpha(z)
 -
 \alpha^{2p}B_\alpha Y_\alpha(z)^{-1}C_\alpha
\]
satisfies
\[
 \mathcal S_\alpha(z)
 =
 zI+\Gamma+o(1)
\]
uniformly on $\mathfrak C$, because
$\alpha^{2p}B_\alpha Y_\alpha(z)^{-1}C_\alpha=O(\alpha^p)$.
Hence $\mathcal S_\alpha(z)^{-1}=O(1)$ uniformly on the contour.

The block inverse formula now gives
\[
 (zI-\widetilde A_\alpha)^{-1}_{21}
 =
 Y_\alpha(z)^{-1}\alpha^pC_\alpha\mathcal S_\alpha(z)^{-1}
 =
 O(\alpha^p),
\]
and
\[
 (zI-\widetilde A_\alpha)^{-1}_{22}
 =
 Y_\alpha(z)^{-1}
 +
 Y_\alpha(z)^{-1}\alpha^pC_\alpha\mathcal S_\alpha(z)^{-1}
 \alpha^pB_\alpha Y_\alpha(z)^{-1}.
\]
The second term is $O(\alpha^p)$.  The first term,
$Y_\alpha(z)^{-1}$, is analytic inside $\mathfrak C$ because the spectrum of
$\alpha^pQ_\alpha$ remains near $0$, outside the contour.  Its contour
integral is therefore zero.  It follows that
\[
    (\Pi_\alpha)_{21}=O(\alpha^p),
    \qquad
    (\Pi_\alpha)_{22}=O(\alpha^p).
\]
Similarly,
\[
    (\Pi_\alpha)_{11}=I_m+o(1),
\]
because the contour encloses all $m$ eigenvalues of $-\Gamma$.

For small $\alpha$, $(\Pi_\alpha)_{11}$ is invertible.  The $m$ vectors
$\Pi_\alpha(x,0)^\top$ therefore span $\operatorname{range}(\Pi_\alpha)$ and
show that
\[
    \mathcal U_\alpha
    =
    \{(x,K_\alpha x):x\in\mathbb C^m\},
    \qquad
    K_\alpha
    =
    (\Pi_\alpha)_{21}(\Pi_\alpha)_{11}^{-1}
    =
    O(\alpha^p).
\]
The standard orthogonal-projector formula for the graph of $K_\alpha$ then
gives
$\|P_{\mathcal U_\alpha}-P_{\mathcal C}\|=O(\|K_\alpha\|)=O(\alpha^p)$.
\end{proof}

\begin{remark}
The projector theorem concerns the whole invariant subspace and therefore does
not require the cut-cell eigenvalues to be simple.  This distinction matters
when two cut cells have the same leading cut-cell coefficient: individual
eigenvectors may not have a unique limiting labeling, while the $m$-dimensional unbounded spectral subspace still converges to the
subspace spanned by the cut-cell coordinates.
\end{remark}

Theorems~\ref{thm:singular} and~\ref{thm:proj} give the following general result:
a singular leading cut-cell block creates a finite cluster of unbounded eigenvalues, and
the corresponding invariant subspace concentrates on the coordinates where
the singular scaling occurs.  Computationally, this identifies a
low-dimensional object to analyze: when the number of cut cells
$m\ll N$, the singular spectral behavior is carried by an $m$-dimensional
invariant subspace associated with the cut-cell eigenvalues rather than requiring analysis of the full
$N$-dimensional state space.  The next sections recover the concrete
cut-cell consequences.

\section{Multiple Cut Cells}
\label{sec:multi}

Let
\[
    J=\{j_1,\ldots,j_m\}
\]
be non-adjacent cut cells.  Assume their volume fractions shrink at comparable rates.  Write the volume
fraction of cut cell $j_k$ as $\alpha_k=\theta_k\alpha$, where $\alpha$ is a
common small parameter, $\theta_k$ is a fixed positive scale factor, and the
scale factors satisfy fixed bounds $0<\theta_-\le\theta_k\le\theta_+<\infty$:
\begin{equation}
    \alpha_k=\theta_k\alpha,
    \qquad
    0<\theta_-\le\theta_k\le\theta_+<\infty,
    \label{eq:comparable-alpha}
\end{equation}
with fixed $m$, $N$, and background CFL $\lambda\in(0,1)$.  Reorder the
unknowns so the cut-cell coordinates come first.

For first-order positive-speed upwind, non-adjacency means that the upwind
neighbor of a cut cell is regular.  The cut-cut block is therefore diagonal:
\[
    (A_\alpha)_{JJ}
    =
    I_m-\frac1\alpha\Gamma,
    \qquad
    \Gamma=
    \operatorname{diag}\!\left(
        \frac{\lambda}{\theta_1},\ldots,
        \frac{\lambda}{\theta_m}
    \right).
\]
The cut-to-regular block is $O(\alpha^{-1})$, while all regular rows are
bounded.  Thus Assumption~\ref{ass:sb} holds with $p=1$.

\begin{corollary}[Invariant subspace for separated cut cells]
\label{thm:multi}
Under~\eqref{eq:comparable-alpha}, the periodic first-order upwind operator has
exactly $m$ unbounded eigenvalues.  After matching with multiplicity,
\begin{equation}
    \mu_k(\alpha)
    =
    -\frac{\lambda}{\theta_k\alpha}+O(1),
    \qquad k=1,\ldots,m.
    \label{eq:multi-eigs}
\end{equation}
If $\mathcal U_\alpha$ is the invariant subspace associated with these eigenvalues
and
\[
    \mathcal C=\operatorname{span}\{e_{j_1},\ldots,e_{j_m}\},
\]
then
\begin{equation}
    \|P_{\mathcal U_\alpha}-P_\mathcal C\|=O(\alpha).
    \label{eq:multi-proj}
\end{equation}
\end{corollary}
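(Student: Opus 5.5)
The plan is to verify Assumption~\ref{ass:sb} with $p=1$ for the reordered upwind matrix and then invoke Theorems~\ref{thm:singular} and~\ref{thm:proj}.

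\textbf{Writing the blocks.} Place the cut cells $J$ first. For cut cell $j_k$, the row of $A_\alpha$ is
\[
    (A_\alpha U)_{j_k}=\Bigl(1-\frac{\lambda}{\theta_k\alpha}\Bigr)U_{j_k}+\frac{\lambda}{\theta_k\alpha}U_{j_k-1}.
\]
Non-adjacency means $j_k-1\notin J$. Hence the cut--cut block is exactly $I_m-\alpha^{-1}\Gamma$, with $\Gamma=\operatorname{diag}(\lambda/\theta_k)$. This gives $R_\alpha=I_m$, which is $O(1)$, so $\|\alpha R_\alpha\|\to0$.

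The cut-to-regular block $B_\alpha$ has one entry $\lambda/(\theta_k\alpha)$ per row. By the lower bound $\theta_k\ge\theta_-$, it satisfies $\|B_\alpha\|\le\lambda/(\theta_-\alpha)=O(\alpha^{-1})$.

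The regular rows are $(1-\lambda)U_i+\lambda U_{i-1}$, independent of $\alpha$. Therefore $C_\alpha$ and $Q_\alpha$ are constant matrices with entries in $[0,1]$, hence $O(1)$.

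$\Gamma$ is invertible because $\lambda>0$ and $\theta_k\le\theta_+<\infty$. It is also diagonal, hence diagonalizable, with eigenvalues $\gamma_k=\lambda/\theta_k$. Since $m$ and $N$ are fixed, every hypothesis of Assumption~\ref{ass:sb} holds.

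\textbf{Applying the theorems.} Theorem~\ref{thm:singular} gives exactly $m$ unbounded eigenvalues. Because $\Gamma$ is diagonalizable and $R_\alpha=O(1)$, its refinement~\eqref{eq:large-O1} gives $\mu_k=-\gamma_k/\alpha+O(1)=-\lambda/(\theta_k\alpha)+O(1)$, which is~\eqref{eq:multi-eigs}. When several $\theta_k$ coincide, the matching is understood with multiplicity, exactly as in Theorem~\ref{thm:singular}. Theorem~\ref{thm:proj} with $p=1$ then gives~\eqref{eq:multi-proj} directly, and it requires no simplicity of the eigenvalues.

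\textbf{Main obstacle.} The only real care is in the bookkeeping. The statement writes $\mu_k=-\lambda/(\theta_k\alpha)+O(1)$, whereas the cut block is $I_m-\alpha^{-1}\Gamma$. The leftover $+1$ is absorbed into $R_\alpha$ and hence into the $O(1)$ term, so no exact shift needs to be tracked.

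One must also confirm that $\|\cdot\|$ can be any fixed subordinate norm. This holds because the dimensions are fixed, so all such norms are equivalent up to constants independent of $\alpha$.

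Finally, non-adjacency is used only to make the cut--cut block diagonal. Without it, an upwind neighbor inside $J$ would add an off-diagonal entry $\lambda/(\theta_k\alpha)$ to $\Gamma$. This is worth remarking but is not needed for the proof.
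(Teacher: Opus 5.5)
Your proposal is correct and follows the paper's own argument. You verify Assumption~\ref{ass:sb} with $p=1$, using $\Gamma=\operatorname{diag}(\lambda/\theta_1,\ldots,\lambda/\theta_m)$ diagonal and invertible, $R_\alpha=I_m$, $B_\alpha=O(\alpha^{-1})$ and bounded regular rows, and then apply Theorem~\ref{thm:singular} (with its diagonalizable, $R_\alpha=O(1)$ refinement) and Theorem~\ref{thm:proj}; your use of $\theta_-$ to bound $B_\alpha$ and the norm-equivalence remark only make explicit what the paper leaves implicit.
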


\begin{proof}
The matrix $\Gamma$ is diagonal and hence diagonalizable, while the remainder in
the cut-cut block is $I_m=O(1)$.  Apply
Theorems~\ref{thm:singular} and~\ref{thm:proj}.
\end{proof}

Equation~\eqref{eq:multi-proj} is the invariant-subspace version of the
one-cell localization theorem.  When two cut cells have the same leading
coefficient, individual eigenvectors in the unbounded spectral cluster need
not select unique coordinate vectors.  The invariant statement is that the
entire unbounded spectral subspace converges to the span of the cut-cell
coordinates; this subspace remains well defined when leading cut-cell
eigenvalues are repeated.

Adjacent cut cells can introduce
diverging couplings inside the leading cut-cell block and therefore require a different
$\Gamma$.  Allowing $m$ or $N$ to grow with $\alpha^{-1}$ is also outside the
fixed-dimensional theorem.  Likewise, widely separated volume scales require
either several small parameters or a rescaling different from
\eqref{eq:comparable-alpha}.

\section{One Cut Cell in \texorpdfstring{$d$}{d} Dimensions}
\label{sec:ddim}

Consider a periodic first-order upwind discretization in $d$ dimensions.  Let
$\lambda_r$ be the directional background CFL number in coordinate direction
$r$, and define the summed background CFL number
\[
    \lambda_\Sigma:=\sum_{r=1}^d\lambda_r.
\]
The corresponding local cut-cell CFL number is
$\lambda_{\alpha,\Sigma}:=\lambda_\Sigma/\alpha$.
For one cut cell $c$ of volume fraction $\alpha$, the cut-cell row is
\begin{equation}
    (A_\alpha U)_c
    =
    \left(1-\frac{\lambda_\Sigma}{\alpha}\right)U_c
    +
    \sum_{r=1}^d
    \frac{\lambda_r}{\alpha}U_{c-\hat r},
    \label{eq:Add}
\end{equation}
where $\hat r$ denotes one grid-index step in coordinate direction $r$, so
$c-\hat r$ is the upwind neighbor of the cut cell in that direction.  Regular
rows remain $O(1)$.

After moving the cut-cell coordinate first,
Assumption~\ref{ass:sb} holds with $m=1$, $p=1$, scalar $\Gamma=\lambda_\Sigma$, and bounded remainder in the cut-cell diagonal.

\begin{corollary}[Multidimensional localized cut-cell mode]
\label{thm:ddim}
For fixed grid size and fixed directional CFL numbers, the operator has one
unbounded eigenvalue
\begin{equation}
    \mu_\alpha
    =-\lambda_{\alpha,\Sigma}+O(1)
    =-\frac{\lambda_\Sigma}{\alpha}+O(1),
    \label{eq:mudd}
\end{equation}
while all remaining eigenvalues are bounded.  If the associated eigenvector is
normalized by $(v_\alpha)_c=1$, then
\begin{equation}
    \|v_\alpha-e_c\|_2=O(\alpha).
    \label{eq:vdd}
\end{equation}
\end{corollary}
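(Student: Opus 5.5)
The plan is to apply Theorems~\ref{thm:singular} and~\ref{thm:proj} with $m=1$, $p=1$, and then convert the subspace statement into an eigenvector estimate. First I verify Assumption~\ref{ass:sb} for the reordered matrix in which the cut-cell coordinate comes first. By~\eqref{eq:Add}, the $(1,1)$ entry is $1-\lambda_\Sigma/\alpha$, so $-\alpha^{-1}\Gamma+R_\alpha$ holds with $\Gamma=\lambda_\Sigma\neq0$ and $R_\alpha=1$. In particular $R_\alpha=O(1)$ and $\alpha R_\alpha\to0$. The row block $B_\alpha$ has entries $\lambda_r/\alpha$ in the columns of the upwind neighbors $c-\hat r$, so $\|B_\alpha\|=O(\alpha^{-1})$. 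The blocks $C_\alpha$ and $Q_\alpha$ come from regular rows, whose coefficients involve only $\lambda_r$ and not $\alpha$, so they are $O(1)$, in fact independent of $\alpha$. The grid size, and hence $N$, is fixed.

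Theorem~\ref{thm:singular} then gives exactly one unbounded eigenvalue, with all others bounded. Since a scalar $\Gamma$ is trivially diagonalizable and $R_\alpha=O(1)$, the refinement~\eqref{eq:large-O1} yields $\mu_\alpha=-\lambda_\Sigma/\alpha+O(1)$, which is~\eqref{eq:mudd}. The cluster is a single simple eigenvalue, so its invariant subspace $\mathcal U_\alpha$ is one-dimensional and spanned by the eigenvector $v_\alpha$.

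For~\eqref{eq:vdd}, I use the graph representation from the proof of Theorem~\ref{thm:proj}, $\mathcal U_\alpha=\{(x,K_\alpha x)\}$ with $K_\alpha=O(\alpha)$. This means the eigenvector normalized by $(v_\alpha)_c=1$ is exactly $(1,K_\alpha)$, so $\|v_\alpha-e_c\|_2=\|K_\alpha\|=O(\alpha)$. As a more elementary cross-check, I would also read this off the regular rows of the eigen-equation: $(\mu_\alpha I-Q_\alpha)v_{\mathrm{reg}}=C_\alpha\cdot 1$. Since $|\mu_\alpha|\sim\lambda_\Sigma/\alpha$ and $Q_\alpha$ is bounded, the resolvent has norm $O(\alpha)$, which gives $\|v_{\mathrm{reg}}\|=O(\alpha)$ directly.

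The one point needing care is the normalization. I must ensure the $c$-component of the eigenvector is nonzero so that $(v_\alpha)_c=1$ is possible. The graph form guarantees this, and so does the direct argument: if $(v_\alpha)_c=0$, the regular equations would force $v_{\mathrm{reg}}=0$ because $\mu_\alpha I-Q_\alpha$ is invertible. Everything else is a direct substitution into the earlier theorems.
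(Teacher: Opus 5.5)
Your proposal is correct and follows the paper's proof, which applies Theorems~\ref{thm:singular} and~\ref{thm:proj} with $m=1$, $p=1$, $\Gamma=\lambda_\Sigma$, and invokes the $O(1)$ refinement because a scalar $\Gamma$ is diagonalizable and the cut-cell remainder is bounded. You add details the paper leaves implicit: checking Assumption~\ref{ass:sb}, and passing from the one-dimensional invariant subspace to the normalized eigenvector through the graph map $K_\alpha$. Your regular-row resolvent argument, $v_{\mathrm{reg}}=(\mu_\alpha I-Q_\alpha)^{-1}C_\alpha$, is also a valid elementary route to~\eqref{eq:vdd} that does not need Theorem~\ref{thm:proj}.
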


\begin{proof}
Apply Theorems~\ref{thm:singular} and~\ref{thm:proj} with
$\Gamma=\lambda_\Sigma$.  Since the scalar matrix $\Gamma$ is diagonalizable and the cut-cell
remainder is bounded, the $O(1)$ eigenvalue refinement applies.
\end{proof}

The multidimensional corollary is a leading-order spectral result.  The geometry of a real embedded boundary affects face areas,
neighboring cut cells, reconstruction, and the redistribution neighborhoods.
Those features enter through the actual assembled matrix $\Gamma$ and cannot be
replaced by the scalar $L$ without a separate derivation.

\section{Finite-Volume Conditions for the Asymptotic Cut-Cell Block}
\label{sec:characterization}

The abstract block assumptions are useful only if they can be checked from a
discretization.  We therefore state sufficient conditions at the assembled
operator level and then derive a local formula for a one-dimensional
bounded-flux scheme.  We state sufficient conditions rather than an equivalence because cancellations
among face contributions can invalidate conclusions based only on face-level
scaling estimates.

Let $J_c$ and $J_r$ denote the cut-cell and regular-cell index sets,
respectively.  For index sets $X$ and $Y$, $(A_\alpha)_{XY}$ denotes the block
of $A_\alpha$ with rows in $X$ and columns in $Y$.  Assume $\alpha_k=\theta_k\alpha$ with the fixed
bounds in \eqref{eq:comparable-alpha}.

\begin{theorem}[Sufficient assembled-operator conditions]
\label{thm:characterization}
Suppose that, after ordering the cut-cell coordinates first, there exists
$p>0$ and an invertible matrix $\Gamma$ such that
\begin{equation}
    \alpha^p(A_\alpha)_{J_cJ_c}\longrightarrow-\Gamma,
    \label{eq:JJlimit}
\end{equation}
and
\begin{equation}
    \|(A_\alpha)_{J_cJ_r}\|=O(\alpha^{-p}),
    \qquad
    \|(A_\alpha)_{J_rJ_c}\|+\|(A_\alpha)_{J_rJ_r}\|=O(1).
    \label{eq:blockbounds}
\end{equation}
Then $A_\alpha$ satisfies Assumption~\ref{ass:sb}.
\end{theorem}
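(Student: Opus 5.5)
The plan is to exhibit the decomposition \eqref{eq:singular_block} explicitly from the assembled blocks and then check the four items of Assumption~\ref{ass:sb} one at a time. With the cut-cell coordinates ordered first, I set
\[
    R_\alpha:=(A_\alpha)_{J_cJ_c}+\alpha^{-p}\Gamma,\qquad
    B_\alpha:=(A_\alpha)_{J_cJ_r},\qquad
    C_\alpha:=(A_\alpha)_{J_rJ_c},\qquad
    Q_\alpha:=(A_\alpha)_{J_rJ_r}.
\]
This is an identity, not an approximation: the upper-left block equals $-\alpha^{-p}\Gamma+R_\alpha$ by construction, so $A_\alpha$ has exactly the form \eqref{eq:singular_block} with the given $p>0$ and the given $\Gamma$.

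Next I verify the asymptotic conditions. Multiplying the definition of $R_\alpha$ by $\alpha^p$ gives $\alpha^pR_\alpha=\alpha^p(A_\alpha)_{J_cJ_c}+\Gamma$. Hypothesis \eqref{eq:JJlimit} says this tends to zero, so $\|\alpha^pR_\alpha\|\to0$ in any norm; all norms are equivalent because $m$ is fixed. The bound $\|B_\alpha\|=O(\alpha^{-p})$ is the first half of \eqref{eq:blockbounds}. The bounds $\|C_\alpha\|=O(1)$ and $\|Q_\alpha\|=O(1)$ follow from the second half, since each term is bounded by the sum. Invertibility of $\Gamma$ is assumed.

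The only remaining point concerns norms. Assumption~\ref{ass:sb} refers to a fixed subordinate norm, while \eqref{eq:blockbounds} may have been stated in another norm, and the blocks are rectangular. I would note that $N$ and $m$ are fixed, so all matrix norms on each block space are equivalent up to constants independent of $\alpha$. The $O(\cdot)$ and $o(1)$ statements therefore transfer. The dimensions are fixed because the index sets $J_c$ and $J_r$ do not depend on $\alpha$, and the comparability condition \eqref{eq:comparable-alpha} is used only to guarantee that a single parameter $\alpha$ governs all cut cells.

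I expect no real obstacle, since the argument is a relabeling. The only care needed is to define $R_\alpha$ so that \eqref{eq:JJlimit} translates exactly into $\alpha^pR_\alpha\to0$, and not into the stronger condition $R_\alpha=O(1)$. That stronger condition is needed only for the refinement \eqref{eq:large-O1} and is not claimed here.
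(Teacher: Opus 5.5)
Your proposal is correct and follows the paper's proof exactly: the same definitions of $R_\alpha$, $B_\alpha$, $C_\alpha$, $Q_\alpha$ from the assembled blocks, with \eqref{eq:JJlimit} translating directly into $\alpha^pR_\alpha\to0$ and \eqref{eq:blockbounds} supplying the remaining bounds. Your added remark on norm equivalence in fixed dimensions is a harmless clarification the paper leaves implicit.
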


\begin{proof}
Set
\[
 R_\alpha=(A_\alpha)_{J_cJ_c}+\alpha^{-p}\Gamma,\quad
 B_\alpha=(A_\alpha)_{J_cJ_r},\quad
 C_\alpha=(A_\alpha)_{J_rJ_c},\quad
 Q_\alpha=(A_\alpha)_{J_rJ_r}.
\]
Equation~\eqref{eq:JJlimit} is exactly
$\alpha^pR_\alpha\to0$, and~\eqref{eq:blockbounds} supplies the remaining
bounds.
\end{proof}

Let $C_c$ denote the cut-cell control volume, $|C_c|$ its volume,
$\partial C_c$ its boundary, $f$ a face on that boundary, $|f|$ the face
measure, and $F_f(U)$ the numerical flux through face $f$ with the chosen
outward orientation.  A conservative finite-volume update on $C_c$ has the form
\begin{equation}
    U_c^{n+1}
    =
    U_c^n
    -
    \frac{\Delta t}{|C_c|}
    \sum_{f\subset\partial C_c}|f|\,F_f(U).
    \label{eq:fv_general}
\end{equation}
The singular scaling can therefore come from the small volume
$|C_c|$, from reconstruction coefficients inside $F_f$, from shrinking or
nonshrinking face measures, or from combinations of these effects.  The
assembled conditions above keep all of those factors visible instead of
assuming that the volume fraction alone determines the power of $\alpha$.

\section{One-Dimensional Bounded-Flux Schemes}
\label{sec:verify}

We now connect the block analysis to the local flux coefficients in the
simplest setting where an explicit formula is available.  Consider a scalar
one-dimensional conservation law with one cut cell of width $\alpha h$.
Linearize the two numerical fluxes adjacent to the cut cell.  Let
$\beta_{c\pm1/2,j}(\alpha)$ denote the resulting coefficient multiplying cell
value $U_j$, where $j$ ranges over the finite flux stencil:
\begin{equation}
    F_{c+1/2}(U)
    =
    \sum_j \beta_{c+1/2,j}(\alpha)U_j,
    \qquad
    F_{c-1/2}(U)
    =
    \sum_j \beta_{c-1/2,j}(\alpha)U_j.
    \label{eq:linearized_flux}
\end{equation}
Only finitely many coefficients are nonzero.

\begin{theorem}[Leading cut-cell coefficient for bounded-flux schemes]
\label{thm:gamma_formula}
Assume
\[
    \beta_{c\pm1/2,j}(\alpha)=O(1)
\]
uniformly as $\alpha\to0$, all regular rows of the assembled update are
$O(1)$, and
\begin{equation}
    g_\alpha
    :=
    \frac{\Delta t}{h}
    \left(
      \beta_{c+1/2,c}(\alpha)-\beta_{c-1/2,c}(\alpha)
    \right)
    =
    \Gamma_0+O(\alpha),
    \qquad
    \Gamma_0\ne0.
    \label{eq:g-alpha}
\end{equation}
Then the assembled operator satisfies Assumption~\ref{ass:sb} with
$m=1$, $p=1$, scalar leading cut-cell coefficient $\Gamma_0$, bounded
$R_\alpha$, $B_\alpha=O(\alpha^{-1})$, and bounded lower blocks.
\end{theorem}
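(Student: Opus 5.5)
The plan is to write out the cut-cell row of the assembled update explicitly and then read off the blocks of \eqref{eq:singular_block}. The block structure itself will come from Theorem~\ref{thm:characterization}. With $|C_c|=\alpha h$, the conservative update \eqref{eq:fv_general} in one dimension, with outward orientation, is
\[
    (A_\alpha U)_c
    =
    U_c-\frac{\Delta t}{\alpha h}\bigl(F_{c+1/2}(U)-F_{c-1/2}(U)\bigr).
\]
Substituting the linearized fluxes \eqref{eq:linearized_flux} gives, for each $j$ in the finite stencil,
\[
    (A_\alpha)_{cj}
    =
    \delta_{cj}
    -\frac{1}{\alpha}\,\frac{\Delta t}{h}
    \bigl(\beta_{c+1/2,j}(\alpha)-\beta_{c-1/2,j}(\alpha)\bigr).
\]

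With $m=1$ and $J_c=\{c\}$, the diagonal entry is $(A_\alpha)_{cc}=1-\alpha^{-1}g_\alpha$. Using \eqref{eq:g-alpha}, I will write
\[
    g_\alpha=\Gamma_0+\alpha r_\alpha,\qquad r_\alpha=O(1).
\]
This gives
\[
    (A_\alpha)_{cc}=-\alpha^{-1}\Gamma_0+(1-r_\alpha),
\]
so the remainder $R_\alpha=1-r_\alpha$ is bounded. In particular $\alpha R_\alpha\to0$, which is \eqref{eq:JJlimit} with $p=1$ and $\Gamma=\Gamma_0$. Invertibility of $\Gamma$ is exactly the hypothesis $\Gamma_0\ne0$.

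For the off-diagonal entries of the cut-cell row ($j\ne c$), each entry equals $\alpha^{-1}(\Delta t/h)$ times a difference of $O(1)$ coefficients. Since only finitely many coefficients are nonzero and $N$ is fixed, $\|B_\alpha\|=O(\alpha^{-1})$ in any subordinate norm. The regular rows are $O(1)$ by hypothesis, so $C_\alpha$ and $Q_\alpha$ are bounded. Theorem~\ref{thm:characterization} then gives Assumption~\ref{ass:sb} with exactly the stated data.

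The only point needing care is the time-step scaling: $\Delta t/h$ must be treated as fixed (the fixed background CFL regime of Section~\ref{sec:singular}), so that it does not absorb powers of $\alpha$. Under that regime the argument is bookkeeping, and I do not expect a genuine obstacle. Beyond this, I only need to check that the $O(\alpha)$ error in $g_\alpha$ is exactly what makes $R_\alpha$ bounded, rather than merely $o(\alpha^{-1})$. This boundedness is what later permits the $O(1)$ eigenvalue refinement \eqref{eq:large-O1}.
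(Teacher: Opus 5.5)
Your proposal is correct and follows essentially the same route as the paper. You write the cut-cell row as $U_c-(\Delta t/(\alpha h))(F_{c+1/2}-F_{c-1/2})$, read off the diagonal entry $1-g_\alpha/\alpha=-\Gamma_0/\alpha+O(1)$ and the $O(\alpha^{-1})$ off-diagonal entries from the bounded $\beta$'s, and take the lower blocks as bounded by hypothesis. Routing the conclusion through Theorem~\ref{thm:characterization}, and stating explicitly that $\Delta t/h$ is held fixed (the paper uses this implicitly for the off-diagonal bound), are only minor refinements of the same argument.
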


\begin{proof}
The cut-cell update is
\[
 U_c^{n+1}
 =
 U_c^n
 -
 \frac{\Delta t}{\alpha h}
 \bigl(F_{c+1/2}(U)-F_{c-1/2}(U)\bigr).
\]
The coefficient multiplying $U_c$ is therefore
\[
    1-\frac{g_\alpha}{\alpha}
    =
    -\frac{\Gamma_0}{\alpha}+O(1).
\]
Every other coefficient in the cut-cell row is $O(\alpha^{-1})$ because the
linearized flux coefficients are bounded.  By assumption, the regular rows
remain bounded.  These are precisely the block estimates of
Assumption~\ref{ass:sb}.
\end{proof}

By~\eqref{eq:g-alpha}, $\Gamma_0$ is the leading coefficient of $U_c$ in
the dimensionless flux difference
$(\Delta t/h)(F_{c+1/2}-F_{c-1/2})$ as $\alpha\to0$.
For positive-speed first-order upwind,
\[
    \beta_{c+1/2,c}=a,\qquad \beta_{c-1/2,c}=0,
\]
so $\Gamma_0=\lambda$.  The abstract theorem then recovers the leading
unbounded eigenvalue $-\lambda/\alpha+O(1)$, consistent with
Theorem~\ref{thm:eigenvalue}.

The value $2$ and the spectral-threshold formula in
Section~\ref{sec:criterion} are specific to the complete characteristic
equation of the first-order upwind model with cell merging.  The leading cut-cell coefficient
$\Gamma_0$ alone determines the leading singular spectral scale, not a
scheme-independent redistribution threshold.

If reconstruction introduces another inverse power of the cut-cell scale,
then the cut-cell row can be more singular.  For example, a reconstruction
coefficient of size $O(\alpha^{-1})$ combined with division by the cut-cell
volume produces $O(\alpha^{-2})$ entries.  In that case the same block analysis is
applied with $p=2$ after computing the nonzero scaled limit
$\alpha^2(A_\alpha)_{J_cJ_c}$.  This observation concerns the spectral scaling
of the assembled operator; it does not by itself imply a redistribution threshold
for a higher-order reconstruction.

\section{MUSCL--Minmod Extension}
\label{sec:muscl-theory}

The first-order calculation isolates the localized eigenvalue behavior, but the
higher-order benchmark in Section~\ref{sec:numerics} uses piecewise-linear
MUSCL reconstruction with the minmod limiter and SSPRK2.  We now analyze the
localized cut-cell mode of that stage map directly.

Consider the one-cut periodic mesh of Section~\ref{sec:1d} with cut cell
$c=0$, upstream regular cell $l=N-1$, and fixed local CFL $L>2$, so the
background CFL is $\lambda=\alpha L$.  Let $\mathcal E_s$ denote one
forward Euler MUSCL--minmod stage followed by the scalar blend toward the merged state
\[
    \mathcal E_s(V)
    =(1-s)\widehat U(V)+s\mathcal R(\widehat U(V)).
\]
Because minmod is homogeneous for every real scalar,
$\operatorname{minmod}(\gamma r_1,\gamma r_2)
=\gamma\operatorname{minmod}(r_1,r_2)$, the map $\mathcal E_s$ is
homogeneous of degree one.  We therefore study nonzero vectors $v$ satisfying
the nonlinear eigenvalue equation
\[
    \mathcal E_s(v)=\nu v.
\]

At the unstable crossing $\nu=-1$, normalize $v_0=1$ and write
\[
    v_{N-1}=\alpha a.
\]
For the localized branch, the minmod slope vanishes at the cut cell and at
the downstream regular cells, while the upstream cell $N-1$ selects its
backward slope.  The next theorem analyzes this particular branch of the
piecewise-linear minmod map and verifies that these limiter choices remain
consistent for sufficiently small $\alpha$.

\begin{theorem}[MUSCL--minmod crossing on a fixed minmod branch]
\label{thm:muscl-crossing}
Fix $L>2$.  Use the minmod branch described above and impose zero incoming
reconstructed perturbation at the remote face entering the upstream cell
$N-1$.  Then the blended MUSCL--minmod Euler-stage map has a nonzero real
vector $v$ satisfying $\mathcal E_s(v)=-v$ at the unique value
\begin{equation}
 s_{\mathrm M}^{\mathrm{loc}}(\alpha,L)
 =
 \frac{(L-2)(1+\alpha)(3L\alpha-4)}
 {3L^2\alpha^2+3L^2\alpha
 -6L\alpha^2-8L\alpha-4L+4\alpha+4}.
 \label{eq:sM-local}
\end{equation}
The upstream component is exactly
\begin{equation}
    v_{N-1}=\frac{L-2}{2}\alpha
    \label{eq:muscl-upstream-component}
\end{equation}
in this localized cut-cell model.  Moreover,
\begin{equation}
 s_{\mathrm M}^{\mathrm{loc}}(\alpha,L)
 =
 \frac{L-2}{L-1}
 -
 \frac{L(L-2)}{4(L-1)^2}\alpha
 +O(\alpha^2),
 \label{eq:sM-expansion}
\end{equation}
and the crossing is transverse:
$\partial_s\nu=L-1+O(\alpha)>0$.
\end{theorem}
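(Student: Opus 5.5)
We will reduce $\mathcal E_s(v)=-v$ on the stated minmod branch to two scalar equations, one from the upstream cell $N-1$ and one from the cut cell $0$. These play the role of the two merge-affected rows in the proof of Theorem~\ref{thm:criterion}, and we solve them for the pair $(a,s)$. The steps are: (i) write the linear stage map on the branch; (ii) solve the resulting $2\times2$ system; (iii) verify that the limiter choices are consistent; (iv) expand the solution in $\alpha$ and check transversality.

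On the branch, the cut-cell slope and the downstream slopes vanish. The upstream cell uses its backward slope. The remote face entering $N-1$ carries zero perturbation, so the slope and inflow terms there involve only $v_{N-1}=\alpha a$. The upwind face value at $x_{N-1/2}$ is therefore $v_{N-1}+\tfrac12(v_{N-1}-0)=\tfrac32\alpha a$, and the inflow into $N-1$ is zero. With $\lambda=\alpha L$, the provisional values are
\[
 \widehat v_{N-1}=\alpha a\Bigl(1-\tfrac32\alpha L\Bigr),\qquad
 \widehat v_0=1-L+\tfrac32 L\alpha a ,
\]
where the face value at $x_{1/2}$ is $v_0=1$ because the cut-cell slope is zero. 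We then set $\widehat Q_0=(\widehat v_{N-1}+\alpha\widehat v_0)/(1+\alpha)$ and impose
\[
 -\alpha a=(1-s)\widehat v_{N-1}+s\widehat Q_0,\qquad -1=(1-s)\widehat v_0+s\widehat Q_0 .
\]
Each equation is affine in $s$ for fixed $a$ and affine in $a$ for fixed $s$. We plan to subtract the two equations so that the $s\widehat Q_0$ terms cancel, which gives
\[
 1-\alpha a=(1-s)\bigl(\widehat v_{N-1}-\widehat v_0\bigr).
\]
Combined with one of the original equations, this should force $a=(L-2)/2$ exactly, which is \eqref{eq:muscl-upstream-component}; the cancellation is what makes $a$ independent of $s$. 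Substituting this value of $a$ back yields the single affine equation for $s$ whose solution is \eqref{eq:sM-local}. Uniqueness follows because the coefficient of $s$ is nonzero for small $\alpha$. The remaining components $v_1,\dots,v_{N-2}$ are then determined by the unmodified regular rows with zero slopes, exactly as in \eqref{eq:muq}. They are geometric with ratio $q_-=O(\alpha)$, which is consistent with the zero-inflow idealization up to higher order.

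The step I expect to be the main obstacle is limiter consistency, namely showing that the assumed minmod selections actually hold for the computed $v$ when $\alpha$ is small. At the cut cell, the backward difference $(1-\alpha a)$ is positive, since $a>0$ for $L>2$. The forward difference $v_1-1\approx-1$ is negative, so minmod returns $0$. At the downstream cells, the components alternate in sign with ratio $q_-<0$, so consecutive differences change sign and every slope there is zero. At cell $N-1$, we need the backward difference to be the smaller in magnitude and of the same sign as the forward difference: $|v_{N-1}-v_{N-2}|=O(\alpha)$ while $v_0-v_{N-1}\approx1$. Both are positive when the remote value is $O(\alpha^{2})$, and this remote-value condition is exactly why the remote-face hypothesis is imposed. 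Here I would check the signs carefully, including the sign of $v_{N-2}$.

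Finally, we expand \eqref{eq:sM-local} in $\alpha$. The numerator is $-4(L-2)\bigl(1+\alpha-\tfrac34L\alpha\bigr)+O(\alpha^2)$ and the denominator is $-4(L-1)+(3L^2-8L+4)\alpha+O(\alpha^2)$. Their ratio gives $(L-2)/(L-1)$ at leading order and the stated coefficient $-L(L-2)/\bigl(4(L-1)^2\bigr)$ of $\alpha$. For transversality, we treat $\nu$ as unknown and repeat the leading-order reduction with $\nu$ in place of $-1$. At $\alpha=0$ this gives $\nu=-(L-1)(1-s)$, as for $\mu_u$ in Theorem~\ref{thm:criterion}. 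The implicit-function theorem applied to the affine system then gives $\partial_s\nu=L-1+O(\alpha)>0$.
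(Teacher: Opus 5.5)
Your proposal is correct and follows the paper's proof essentially step for step: the same closure-based provisional values, the same pair of crossing equations in $(a,s)$, elimination of $s$ to get $a=(L-2)/2$, the affine solve for $s_{\mathrm M}^{\mathrm{loc}}$, the same expansion, and the same leading-order transversality argument $\nu=-(L-1)(1-s)+O(\alpha)$. There are two small refinements. First, $a$ is independent of $s$ because the volume-weighted combination (first equation)${}+\alpha\cdot{}$(second equation) removes $s$ by conservation of the merge, giving $-\alpha(a+1)=\widehat v_{N-1}+\alpha\widehat v_0=\alpha(a+1-L)$ once the $\tfrac32$ terms cancel; the cancellation in your difference equation is not itself the reason. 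Second, your alternating-sign argument for zero downstream slopes holds only along the localized tail: at the last periodic cell $N-2$ the forward neighbor $v_{N-1}=\alpha a>0$ breaks the alternation (for even $N$ both differences are positive), and this is exactly the remote-closure region that the theorem's hypothesis sets aside.
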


\begin{proof}
Normalize $v_0=1$ and write $v_{N-1}=\alpha a$.  At the cut cell,
$v_0-v_{N-1}>0$ while the downstream difference has the opposite sign, so
the minmod slope is zero.  At the upstream cell $N-1$, the backward
difference quotient is $O(\alpha)$ and the forward quotient toward the cut
cell is $O(1)$.  For $a$ near $(L-2)/2>0$, both are positive and minmod
therefore selects the backward slope.

Under the local closure, the provisional values entering cell merging are
\begin{align}
    \widehat v_{N-1}
    &=
    \alpha a-\frac32\alpha^2La,
    \label{eq:muscl-vhat-left}\\
    \widehat v_0
    &=
    1-L+\frac32\alpha La.
    \label{eq:muscl-vhat-cut}
\end{align}
Let
\[
    Q=\frac{\widehat v_{N-1}+\alpha\widehat v_0}{1+\alpha}.
\]
The crossing equations
\begin{equation}
    -\alpha a=(1-s)\widehat v_{N-1}+sQ,
    \qquad
    -1=(1-s)\widehat v_0+sQ
    \label{eq:muscl-crossing-system}
\end{equation}
are two algebraic equations in $(a,s)$.  Eliminating $a$ gives
\[
    a=\frac{L-2}{2}
\]
and the unique nearby root~\eqref{eq:sM-local}.  Expanding at fixed $L$
gives~\eqref{eq:sM-expansion}.

The strict inequalities defining these limiter choices persist for sufficiently
small $\alpha$.  On this branch the stage map is linear and the localized
eigenvalue satisfies
\[
    \nu(s)=-(L-1)(1-s)+O(\alpha).
\]
Hence
\[
    \partial_s\nu=L-1+O(\alpha)>0.
\]
\end{proof}

\begin{remark}[Periodic closure]
\label{rem:muscl-periodic-closure}
For the full periodic one-cut mesh, the localized downstream tail has
$|v_i|=O(\alpha^i)$ away from the cut cell until the remote closure region.
Thus the reconstructed perturbation returning to the upstream cell is
higher order in $\alpha$ for fixed $N$.  Because minmod is piecewise linear
and its active branch can change in that remote closure region, we do not use
a smooth implicit-function argument there and do not claim a theorem for the
exact finite-$N$ periodic crossing.  Instead, Section~\ref{sec:numerics}
checks the full periodic nonlinear eigenvalue equations directly.  At $N=150$
the measured crossing agrees with~\eqref{eq:sM-local} to within
$1.2\times10^{-15}$ for both finite-$\alpha$ cases used in the benchmark.
\end{remark}

\begin{corollary}[SSPRK2 crossing]
\label{cor:muscl-ssprk2}
For the minmod branch used in Theorem~\ref{thm:muscl-crossing}, let
\[
    \mathcal S_s(V)
    =
    \frac12V+\frac12\mathcal E_s(\mathcal E_s(V))
\]
be the SSPRK2 map.  If a nonzero real vector $v$ satisfies $\mathcal E_s(v)=\nu v$,
then
\begin{equation}
    \mathcal S_s(v)
    =
    \frac{1+\nu^2}{2}\,v.
    \label{eq:ssprk2-eigenvalue}
\end{equation}
Hence the unstable real localized branch reaches unit amplification at the
same crossing $\nu=-1$, so on this minmod branch its redistribution
threshold is $s_{\mathrm M}^{\mathrm{loc}}(\alpha,L)$.
\end{corollary}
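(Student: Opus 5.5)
The argument uses only the homogeneity of $\mathcal E_s$ and the fact that the minmod branch is fixed. First, if $\mathcal E_s(v)=\nu v$ with $v\ne0$ real, then $\mathcal E_s(\mathcal E_s(v))=\mathcal E_s(\nu v)$.

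Degree-one homogeneity for every real scalar (the property $\operatorname{minmod}(\gamma r_1,\gamma r_2)=\gamma\operatorname{minmod}(r_1,r_2)$, which also holds for $\gamma<0$) gives $\mathcal E_s(\nu v)=\nu\,\mathcal E_s(v)$. Hence $\mathcal E_s(\nu v)=\nu^2 v$. The redistribution blend $(1-s)\widehat U+s\mathcal R(\widehat U)$ is linear in $\widehat U$, so homogeneity of the full stage map reduces to homogeneity of the MUSCL--minmod Euler step $\widehat U(V)$. That step is built from minmod slopes and linear flux differences, so it is homogeneous.

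Substituting into the SSPRK2 formula then gives
\[
\mathcal S_s(v)=\tfrac12 v+\tfrac12\nu^2 v=\frac{1+\nu^2}{2}\,v,
\]
which is \eqref{eq:ssprk2-eigenvalue}.

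It remains to check that the second stage stays on the same minmod branch. The input $\nu v$ has every difference scaled by the same real factor $\nu$. When $\nu<0$, all sign relations flip together. Minmod's choice between the backward slope, the forward slope, or zero depends only on whether the two differences share a sign and on their relative magnitudes. Both of these are invariant under a common nonzero scaling. So the same active branch as in Theorem~\ref{thm:muscl-crossing} is selected, and the branch-restricted statement is consistent. I expect this to be the only delicate point, and the scaling invariance settles it directly, without any smallness argument.

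For the crossing, consider the real localized branch $\nu(s)=-(L-1)(1-s)+O(\alpha)$, which is real and, near $s_0$, close to $-1$. The SSPRK2 amplification $g(\nu)=(1+\nu^2)/2$ satisfies $g\ge|\nu|$, with $g(\nu)=1$ for real $\nu$ exactly when $\nu^2=1$. On the branch near $\nu\approx-1$, unit amplification therefore occurs precisely at $\nu=-1$. By Theorem~\ref{thm:muscl-crossing} this happens at the unique value $s=s_{\mathrm M}^{\mathrm{loc}}(\alpha,L)$. Transversality transfers as well: $\partial_s g=\nu\,\partial_s\nu=-(L-1)+O(\alpha)\ne0$ there, so the amplification decreases through $1$ as $s$ increases. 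The SSPRK2 threshold on this branch is therefore $s_{\mathrm M}^{\mathrm{loc}}$.
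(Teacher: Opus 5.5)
Your proposal is correct and follows the paper's proof. Degree-one homogeneity gives $\mathcal E_s(\nu v)=\nu^2 v$, substitution gives the factor $(1+\nu^2)/2$, and for real $\nu$ near $-1$ unit amplification occurs exactly at $\nu=-1$. Your branch-consistency check and transversality computation $\partial_s g=\nu\,\partial_s\nu$ are valid additions the paper omits, though the branch check is already implied by the global homogeneity of minmod.
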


\begin{proof}
Homogeneity gives
$\mathcal E_s(\nu v)=\nu\mathcal E_s(v)=\nu^2v$.  Substitution into the
SSPRK2 formula yields~\eqref{eq:ssprk2-eigenvalue}.  For real $\nu<-1$,
$(1+\nu^2)/2>1$, with equality at $\nu=-1$.
\end{proof}

\begin{remark}[Scope of the MUSCL result]
\label{rem:muscl-scope}
Theorem~\ref{thm:muscl-crossing} concerns one specified branch of the
piecewise-linear minmod map with cell merging.  It does
not assert global nonlinear stability or power boundedness of the full
MUSCL--SSPRK2 method.  Its role is to connect the higher-order benchmark to
the theory: the same leading value $s_0(L)$ appears, and the MUSCL
calculation supplies the first finite-$\alpha$ correction.
\end{remark}

\section{Numerical Experiments}
\label{sec:numerics}

The numerical study has four components.  We first evaluate the spectrum of
the exact periodic upwind operator with cell merging and compute
its finite-time induced amplification near the eigenvalue-crossing threshold.  We then
assemble a two-dimensional rotated-channel cut-cell operator and determine
its redistribution crossing directly from the full spectrum.  Next we test
the MUSCL--minmod threshold from Theorem~\ref{thm:muscl-crossing} in the same
second-order discretization used to derive it, measuring convergence and
fixed-error cell count against weighted-SRD comparators.  Finally, we retain
the broader layout and 100-period tests of the leading prescription $s_0$ and
add a 100-period check of the finite-$\alpha$ MUSCL value $s_{\mathrm M}$.

\subsection{Full-matrix spectral crossing}

Figure~\ref{fig:spectral_mechanism}, placed after
Theorem~\ref{thm:criterion}, shows the localized eigenvalue branch and the rest of the spectrum for
$N=150$, $\alpha=0.10$, and $L=4$.  The localized eigenvalue begins near
$1-L=-3$, crosses $\mu=-1$ at
\[
    s_{-1}^{(150)}\approx0.661654,
\]
and then lies inside the unit disk.  The leading prescription
$s_0=(L-2)/(L-1)=2/3$ is slightly larger, consistent with the negative
$O(\alpha^2)$ correction in~\eqref{eq:sNexp}.  The associated eigenvectors
remain localized near the cut cell throughout the crossing.

Figure~\ref{fig:spectrum-unit-disk} plots the spectrum in the complex
$\mu$-plane.  The orange curve traces the localized branch $\mu_u(s)$.
The gray markers are the remaining individual eigenvalues of the full matrix
at $s=s_{-1}^{(N)}$; together they form the inner spectral loop.  At $s=0$,
$\mu_u(s)$ lies outside $|\mu|=1$; at $s=s_{-1}^{(N)}$, $\mu_u=-1$; and
for larger $s$ the branch lies inside the unit disk.  The black point at
$\mu=1$ is the conservation eigenvalue.

\begin{figure}[ht]
\centering
\includegraphics[width=0.82\textwidth]{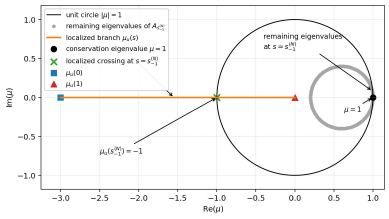}
\caption{Localized spectral crossing for $N=150$, $\alpha=0.10$, and $L=4$.
The unit circle $|\mu|=1$ is the spectral-stability boundary.  The orange
curve traces the localized branch $\mu_u(s)$ for $0\le s\le1$.  The gray
markers denote the remaining individual eigenvalues of the full matrix
$A_{s_{-1}^{(N)}}$ evaluated at the exact crossing
$s_{-1}^{(150)}=0.661654\ldots$; collectively they form the inner spectral
loop and remain strictly inside the unit circle.  The black point at $\mu=1$ is the
conservation eigenvalue, and the green marker identifies the localized
crossing $\mu_u(s_{-1}^{(150)})=-1$.  The asymptotic threshold
$s_0=2/3$ lies slightly above this finite-$\alpha$ crossing.}
\label{fig:spectrum-unit-disk}
\end{figure}

To visualize the asymptotic correction itself, Figure~\ref{fig:threshold_asymptotics}
plots the exact crossing~\eqref{eq:sNexact} of the full periodic operator for
$L\in\{3,4,6\}$ and $N=150$.  The left panel shows convergence toward the
leading value $s_0(L)$.  The right panel scales the difference by
$\alpha^2$; as $\alpha\to0$ the curves approach
\[
    \frac{L(L-2)}{2(L-1)^2},
\]
the coefficient predicted by~\eqref{eq:sNexp}.  This figure is a
visualization of the exact formula, not independent evidence for it.

\begin{figure}[ht]
\centering
\includegraphics[width=\textwidth]{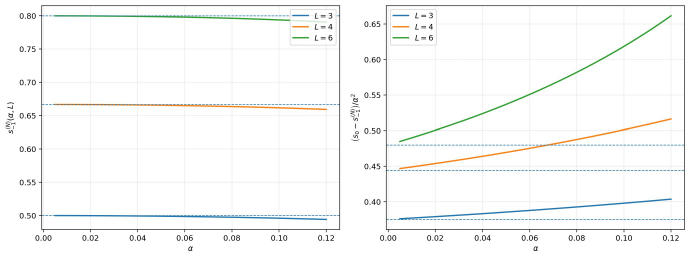}
\caption{Finite-$\alpha$ correction to the local eigenvalue-crossing threshold for
$N=150$.
\textit{Left}: exact crossing $s_{-1}^{(N)}(\alpha,L)$ from
\eqref{eq:sNexact}; dashed horizontal lines are
$s_0(L)=(L-2)/(L-1)$.
\textit{Right}: the scaled difference
$(s_0-s_{-1}^{(N)})/\alpha^2$ approaches
$L(L-2)/(2(L-1)^2)$ as $\alpha\to0$, consistent with
\eqref{eq:sNexp}.}
\label{fig:threshold_asymptotics}
\end{figure}

\subsection{Finite-time amplification near the eigenvalue-crossing threshold}

Because $A_s$ is nonnormal, transient growth can persist after the spectral
crossing.  With the cut cell ordered first, set
\[
    W_\alpha:=\operatorname{diag}(\alpha,1,\ldots,1).
\]
For a matrix $M$, let
\begin{equation}
    \|M\|_\alpha
    :=\sup_{x\ne0}\frac{\|Mx\|_\alpha}{\|x\|_\alpha}
    =\|W_\alpha^{1/2}MW_\alpha^{-1/2}\|_2.
    \label{eq:weighted-induced-norm}
\end{equation}
Figure~\ref{fig:transient-growth} plots $\|A_s^n\|_\alpha$ through
$n=200$ for $N=80$ and fixed $L=4$.

For $\alpha=0.10$, taking $s$ only $0.01$ below the exact crossing gives
$\rho(A_s)=1.0301$ and $\|A_s^{200}\|_\alpha\approx582$; the
eigenvalue outside $\overline{\mathbb D}$ produces geometric modal growth.  At the exact crossing the maximum over the first
$200$ steps is approximately $2.01$.  At the slightly larger model
prescription $s_0=2/3$, the maximum is approximately $2.01$ and
$\|A_s^{200}\|_\alpha$ has returned to $1$.  For $\alpha=0.15$, the
corresponding maxima at the exact crossing and at $s_0$ are approximately
$2.34$ and $2.26$.  These finite-time computations show that crossing the unit circle eliminates
exponential growth of the localized eigenmode while finite nonnormal
amplification remains possible; power boundedness is a separate question.

\begin{figure}[ht]
\centering
\includegraphics[width=0.48\textwidth]{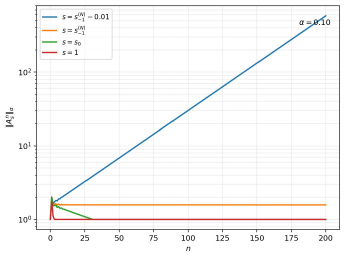}
\hfill
\includegraphics[width=0.48\textwidth]{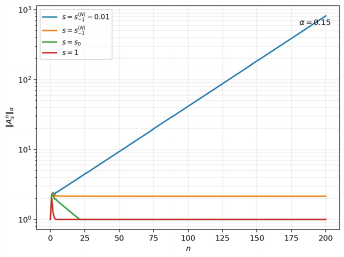}
\caption{Finite-time induced amplification near the spectral crossing for
$N=80$ and $L=4$.  \textit{Left}: $\alpha=0.10$.
\textit{Right}: $\alpha=0.15$.  For $s=s_{-1}^{(N)}-0.01$, $\rho(A_s)>1$ and
$\|A_s^n\|_\alpha$ grows throughout the displayed interval.  For
$s\ge s_{-1}^{(N)}$, the displayed curves remain below $2.35$ for
$0\le n\le200$, showing finite transient amplification despite
$\rho(A_s)\le1$.}
\label{fig:transient-growth}
\end{figure}

\subsection{Two-dimensional rotated-channel redistribution threshold}
\label{sec:2d-experiment}

The preceding experiments use one-dimensional meshes.  To test the
localization result on an actual embedded-boundary geometry, we also
assemble a two-dimensional first-order upwind operator on a rotated channel.
The purpose of this experiment is spectral rather than asymptotic: we ask
whether the assembled two-dimensional cut-cell operator still admits a
blending parameter substantially below full cell merging, and how well
the scalar one-dimensional prediction approximates that fraction.

Let
\[
 \Omega
 =
 \left\{(x,y)\in[0,5/7]\times[0,1]:
 c_-\le y-x\le c_+\right\},
 \qquad c_+=0.78571.
\]
The channel boundaries are parallel to the velocity
\[
    \mathbf a=(-1,-1),
\]
so their physical normal flux is zero.  The geometry is a truncated version
of the $45^\circ$ channel used by Berger~\cite{Berger2015}; truncation removes
the single lower-boundary cut cell that would otherwise terminate at the
outer top corner, while leaving the repeated interior cut-cell geometry
unchanged.  We use a Cartesian background grid with $N=42$ cells per unit
length, so $h=1/42$, and choose $c_-$ separately for each test so the repeated
lower-boundary triangles have prescribed area fraction
\[
    \alpha\in\{0.003698,\,0.02,\,0.05\}.
\]
There are 669 active control volumes and 30 disjoint small-cell neighborhoods.
Each lower-boundary triangle is paired with its Cartesian neighbor immediately
above it.  Figure~\ref{fig:2d-geometry} shows the $\alpha=0.02$ geometry.

\begin{figure}[ht]
\centering
\includegraphics[width=0.68\textwidth]{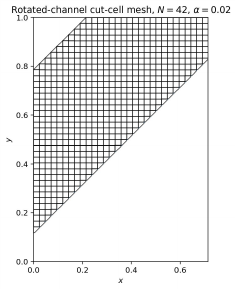}
\caption{Rotated-channel embedded-boundary mesh for $N=42$ and
$\alpha=0.02$.  Hatched cells are the repeated lower-boundary cut cells used
as cut cells in the pairwise redistribution experiment.  Each is paired
with the Cartesian cell immediately above it.}
\label{fig:2d-geometry}
\end{figure}

For first-order upwinding and homogeneous inflow data, let $A$ denote the
assembled two-dimensional Euler update matrix with
\[
    \frac{\Delta t}{h}=0.4,
    \qquad
    \frac{|a_x|\Delta t}{h}
    +
    \frac{|a_y|\Delta t}{h}
    =0.8.
\]
Let $\mathcal R$ denote simultaneous full volume-weighted pair merging on the
30 disjoint neighborhoods.  The blended matrix is
\begin{equation}
    A_s=\big[(1-s)I+s\mathcal R\big]A,
    \qquad 0\le s\le1.
    \label{eq:2d-blend}
\end{equation}
For each repeated triangular cut cell $C_c$, define
\begin{equation}
    L_{\mathrm{def}}
    :=
    \frac{\Delta t}{|C_c|}
    \sum_{f\subset\partial C_c,\;\mathbf a\cdot n_f>0}
    (\mathbf a\cdot n_f)|f|.
    \label{eq:2d-Ldef}
\end{equation}
Thus the diagonal coefficient of the unstabilized cut-cell row is
$1-L_{\mathrm{def}}$.  Substituting this value into the one-dimensional
crossing formula gives
\[
    s_{\mathrm{1D}}
    =
    \frac{L_{\mathrm{def}}-2}{L_{\mathrm{def}}-1}.
\]
For comparison, define the threshold of the assembled two-dimensional matrix
\begin{equation}
    s_{\mathrm{2D}}
    :=
    \inf\{s\in[0,1]:\rho(A_s)\le1\}.
    \label{eq:s2d}
\end{equation}
For $N=42$, $s_{\mathrm{2D}}$ is computed by bisection in $s$ using the
complete eigenvalue spectrum of the $669\times669$ assembled matrix.

\begin{table}[ht]
\centering
\small
\begin{tabular}{rrrrrrrr}
$\alpha$ & $L_{\mathrm{def}}$ & $s_{\mathrm{1D}}$ &
$s_{\mathrm{2D}}$ &
$\rho(A)$ & $\rho(A_{s_{\mathrm{1D}}})$ &
$\rho(A_{s_{\mathrm{2D}}})$ & $\rho(A_1)$\\
\hline
0.003698 & 9.3023 & 0.87955 & 0.87947 & 8.3023 & 0.99932 & 0.99995 & 0.55546\\
0.020000 & 4.0000 & 0.66667 & 0.66796 & 3.0000 & 1.00484 & 0.99872 & 0.56297\\
0.050000 & 2.5298 & 0.34633 & 0.36272 & 1.5298 & 1.02387 & 0.99829 & 0.55794\\
\end{tabular}
\caption{redistribution threshold on the two-dimensional rotated-channel
operator.  The one-dimensional scalar prediction is nearly exact for the
smallest triangles, but slightly underestimates the assembled
two-dimensional threshold for the two larger cut-cell fractions.  Full pair
merging places the spectrum substantially farther inside the unit disk than
is required by the spectral criterion.}
\label{tab:2d-spectral}
\end{table}

Figure~\ref{fig:2d-spectral} shows the full dependence of the spectral radius
on the blending parameter.  For the most restrictive geometry,
$\alpha=0.003698$, the scalar prediction and the assembled two-dimensional
threshold differ by less than $8\times10^{-5}$.  At $\alpha=0.02$ the
difference is approximately $1.3\times10^{-3}$, while at $\alpha=0.05$ the
assembled geometry requires a larger correction,
$s_{\mathrm{2D}}-s_{\mathrm{1D}}\approx0.0164$.  The results show that substituting the cut-cell row coefficient into the
one-dimensional formula is accurate for the smallest triangles but is not a
universal multidimensional formula: the local cut-row coefficient captures the dominant large coefficient in the cut-cell row, while finite geometry and coupling determine the
assembled crossing.

\begin{figure}[ht]
\centering
\includegraphics[width=0.78\textwidth]{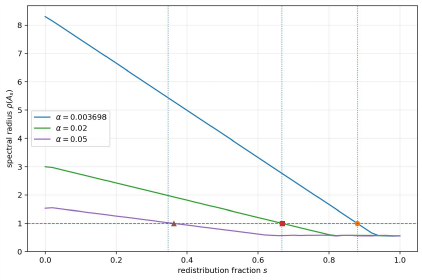}
\caption{Spectral radius of the assembled two-dimensional rotated-channel
operator as the pairwise blending parameter varies.  The horizontal
dashed line is $\rho=1$.  The dotted vertical lines mark the scalar
one-dimensional predictions $s_{\mathrm{1D}}$; the plotted markers identify
the assembled crossings $s_{\mathrm{2D}}$.}
\label{fig:2d-spectral}
\end{figure}

A refinement check for the most restrictive geometry gives
$s_{\mathrm{2D}}=0.87994$ at $N=84$, compared with $0.87947$ at $N=42$,
while $s_{\mathrm{1D}}=0.87955$ on both grids.  This test is intentionally
limited to the eigenvalues of the assembled matrix.  As in the one-dimensional transient study,
$\rho(A_s)\le1$ is not a power-boundedness statement; nonnormal transient
growth can remain near the crossing.  We therefore do not use this experiment
to claim a multidimensional nonlinear or long-time stability theorem.  Its
role is to show, on a genuine cut-cell geometry, that the cut-cell-eigenvalue
the threshold can be computed from the assembled operator and can select a
blending parameter well below full merging.

\subsection{Higher-order accuracy benchmark}

Theorem~\ref{thm:muscl-crossing} analyzes the localized crossing of the
same MUSCL--minmod piecewise-linear minmod region used here, while
Remark~\ref{rem:muscl-periodic-closure} separates that local theorem from
the finite-$N$ periodic closure;
Corollary~\ref{cor:muscl-ssprk2} shows that SSPRK2 preserves that localized-cut-cell mode
crossing.  We now test both the leading value $s_0$ and the finite-$\alpha$
finite-$\alpha$ MUSCL value $s_{\mathrm M}$ in the full periodic benchmark,
and measure their effect on accuracy for
\[
    u_t+u_x=0
\]
on the unit periodic domain.  For unit advection speed, let $T:=1$ denote one
advection period.

The mesh contains $m$ separated cut cells of width $\alpha h$ and $N-m$
regular cells of width $h$ on a unit periodic domain.  Thus
\[
    (N-m)h+m\alpha h=1,
    \qquad
    h=\frac{1}{N-m+m\alpha}.
\]
For the evenly spaced layouts used in the convergence tests, the zero-based
cut-cell indices are
\[
    c_j=\left\lfloor 5+\frac{j}{m-1}(N-11)\right\rfloor,
    \qquad j=0,\ldots,m-1,
\]
for $m>1$ (and $c_0=5$ for $m=1$).  Let $x_i$ denote the center of cell $i$
and $x_{i+1/2}$ its right face.  For two real numbers $r_1$ and $r_2$, define
\[
 \operatorname{minmod}(r_1,r_2)
 =
 \begin{cases}
   \operatorname{sign}(r_1)\min\{|r_1|,|r_2|\}, & r_1r_2>0,\\
   0, & r_1r_2\le0.
 \end{cases}
\]
The piecewise-linear slope and left state at the right face are
\[
 \ell_i=\operatorname{minmod}\!\left(
 \frac{U_i-U_{i-1}}{x_i-x_{i-1}},
 \frac{U_{i+1}-U_i}{x_{i+1}-x_i}
 \right),
 \qquad
 U^-_{i+1/2}=U_i+\ell_i(x_{i+1/2}-x_i).
\]
For positive unit advection speed, the numerical flux is
$F_{i+1/2}=U^-_{i+1/2}$.  If $h_i$ is the width of cell $i$, define the
finite-volume spatial operator
\[
 (\mathcal L_h(U))_i
 =
 -\frac{F_{i+1/2}(U)-F_{i-1/2}(U)}{h_i}.
\]

To make the time discretization and the placement of redistribution explicit,
let $V$ denote an arbitrary stage state and define its provisional forward
Euler update by
\[
    \widehat U(V)=V+\Delta t\,\mathcal L_h(V).
\]
Let $\mathcal R$ denote conservative cell merging: for every cut
cell $c$, $\mathcal R$ replaces $\widehat U_{c-1}$ and $\widehat U_c$ by
their volume-weighted average and leaves all other components unchanged.
The cut cells are separated so these two-cell neighborhoods do not overlap.
For a prescribed scalar $s\in[0,1]$, define one redistributed forward Euler
stage by
\begin{equation}
    \mathcal E_s(V)
    =
    (1-s)\widehat U(V)+s\,\mathcal R(\widehat U(V)).
    \label{eq:Es-benchmark}
\end{equation}
The second-order strong-stability-preserving Runge--Kutta method (SSPRK2) used
in the experiments is then
\begin{equation}
    U^{(1)}=\mathcal E_s(U^n),
    \qquad
    U^{n+1}=\frac12U^n+\frac12\mathcal E_s(U^{(1)}).
    \label{eq:ssprk2}
\end{equation}
Thus redistribution is applied after each forward Euler stage, exactly as in
the numerical implementation.

For comparison, we implement two explicitly defined specializations of
published weighted-SRD constructions on the same separated two-cell
neighborhoods.  First, we specialize the provably stable weighting of Berger
and Giuliani~\cite{BergerGiuliani2024} to this two-cell geometry, with
$\alpha_{\mathrm{target}}=1/2$ and their pre-merging step.  If $l=c-1$ is
the regular cell immediately to the left of a cut cell $c$, the nonzero
weights reduce in this geometry to
\[
    w_{c,c}=1,\qquad
    w_{l,c}=1-2\alpha,\qquad
    w_{l,l}=2\alpha.
\]
Hence
\[
    Q_c^{\mathrm{BG24}}
    =
    \frac{\alpha\widehat U_c+(1-2\alpha)\widehat U_l}{1-\alpha},
\]
with
\[
    (\mathcal R_{\mathrm{BG24}}\widehat U)_c=Q_c^{\mathrm{BG24}},
    \qquad
    (\mathcal R_{\mathrm{BG24}}\widehat U)_l
    =(1-2\alpha)Q_c^{\mathrm{BG24}}+2\alpha\widehat U_l.
\]
The initial cell averages are pre-merged once with
$\mathcal R_{\mathrm{BG24}}$, and the same specialized operator is applied
after each forward Euler stage of SSPRK2.  Numerical comparisons below are
therefore comparisons with this stated specialization, not with every
multidimensional configuration covered by the 2024 algorithm.

Second, we include the $\alpha$--$\beta$ weighting introduced by Giuliani
et al.~\cite{GiulianiEtAl2022}.  Their full multidimensional algorithm
reconstructs neighborhood polynomials before redistributing them.  Because
the present benchmark is a one-dimensional, separated-neighborhood model,
we use the specialization with neighborhood reconstruction slopes set to zero to isolate the published
$\alpha$--$\beta$ weighting itself rather than inventing a one-dimensional
reconstruction stencil not specified by that paper.  With target volume
$V_{\mathrm{target}}=h/2$, the separated two-cell geometry gives
\[
    \beta_c=\frac12-\alpha,\qquad
    \alpha_c=1,\qquad
    \alpha_l=1-\frac{\beta_c}{2},
\]
and
\[
    Q_c^{\alpha\beta}
    =
    \frac{\alpha\widehat U_c+(\beta_c/2)\widehat U_l}
         {\alpha+\beta_c/2}.
\]
The redistributed values are
\[
    (\mathcal R_{\alpha\beta}^{(0)}\widehat U)_c=Q_c^{\alpha\beta},
    \qquad
    (\mathcal R_{\alpha\beta}^{(0)}\widehat U)_l
    =\alpha_l\widehat U_l+\frac{\beta_c}{2}Q_c^{\alpha\beta}.
\]
The superscript $(0)$ records that neighborhood slopes are suppressed.  This
is therefore a controlled specialization of the 2022 weighting, not a claim
to reproduce its full multidimensional production algorithm.  Giuliani
et al.~\cite{GiulianiEtAl2022} themselves note that their weighted variant
can have different stability properties in one-dimensional configurations.

We use the nominal background CFL $\lambda_0=0.4$ and define
\[
    L_0:=\frac{\lambda_0}{\alpha},
    \qquad
    s_0=s_0(L_0)=\frac{L_0-2}{L_0-1}.
\]
For the finite-$\alpha$ MUSCL threshold we also use
\[
    s_{\mathrm M}
    :=
    s_{\mathrm M}^{\mathrm{loc}}(\alpha,L_0)
\]
from~\eqref{eq:sM-local}.  This gives
$s_{\mathrm M}=0.360987\ldots$ for $\alpha=0.15$ and
$s_{\mathrm M}=0.631148\ldots$ for $\alpha=0.10$, compared with
$s_0=0.4$ and $s_0=2/3$, respectively.  As a direct check of the periodic
closure, solving the full one-cut nonlinear eigenvalue equations at $N=150$
gives the same two crossing values to within $1.2\times10^{-15}$.
For a one-period run, let
\[
    n_{\mathrm{step}}:=\left\lceil\frac{T}{\lambda_0 h}\right\rceil,
    \qquad
    \Delta t:=\frac{T}{n_{\mathrm{step}}},
    \qquad
    \lambda_{\mathrm{act}}:=\frac{\Delta t}{h}\le\lambda_0.
\]
Full cell merging uses $s=1$.  Both $s_0$ and $s_{\mathrm M}$ use the nominal value
$L_0$, not the slightly smaller
\[
    L_{\mathrm{act}}:=\frac{\lambda_{\mathrm{act}}}{\alpha}.
\]
This keeps each prescribed blending parameter fixed across grid
refinement rather than allowing the final-step adjustment in $\Delta t$ to
change it.  Over the convergence grids $N\in\{100,200,400,800\}$,
$L_{\mathrm{act}}$ ranges from $2.6638$ to $2.6663$ when
$\alpha=0.15$ ($L_0=2.6667$), and from $3.9912$ to $3.9990$ when
$\alpha=0.10$ ($L_0=4$).  The difference is below $0.9\%$ of $L_0$ in
every reported convergence run.

Evaluating either $s_0(L_0)$ or $s_{\mathrm M}$ requires only a closed-form
scalar expression; no eigenvalue computation, matrix assembly, or iterative
threshold calculation is performed during time stepping.  The redistribution stage is
the same two-cell-merging operation used for $s=1$, followed by the scalar blend in
\eqref{eq:Es-benchmark}.

For the smooth sine-wave test we use
\[
    u(x,0)=\frac12+\frac12\sin(2\pi x).
\]
The Gaussian test uses
\[
    u(x,0)=\exp\!\left[-\frac12
    \left(\frac{d_{\mathbb T}(x,0.25)}{0.06}\right)^2\right],
    \qquad
    d_{\mathbb T}(x,x_0)=\min_{k\in\mathbb Z}|x-x_0+k|.
\]
Reference cell averages are evaluated with 16-point Gauss--Legendre
quadrature on each cell.  Because the exact periodic solution returns to its
initial profile at every integer multiple of $T$, the same quadrature rule
provides the reference averages at $t=kT$.  Let $\overline u_i(t)$ denote this
reference cell average in cell $i$ at time $t$, and define
\begin{equation}
    E_{1,h}(t):=\sum_{i=0}^{N-1} h_i
    \left|U_i(t)-\overline u_i(t)\right|,
    \qquad
    E_{\infty,h}(t):=\max_i
    \left|U_i(t)-\overline u_i(t)\right|.
    \label{eq:benchmark-errors}
\end{equation}
When comparing blending parameters, $E_{1,h}^{(s)}(t)$ and
$E_{\infty,h}^{(s)}(t)$ denote the corresponding errors obtained with blend
$s$.  We also record the discrete mass drift
\begin{equation}
    E_{M,h}(t):=
    \left|\sum_{i=0}^{N-1} h_i U_i(t)
    -\sum_{i=0}^{N-1} h_i U_i(0)\right|.
    \label{eq:mass-drift}
\end{equation}
The one-period convergence tests report these quantities at $t=T$.
Figure~\ref{fig:partial_srd_convergence} shows convergence of
$E_{1,h}(T)$ with ten separated cut cells and
$N\in\{100,200,400,800\}$.  Least-squares slopes of
$\log E_{1,h}(T)$ versus $\log h$ are used for the fitted orders.
For $\alpha=0.15$, the sine-wave orders are $1.87$ for
$s=s_{\mathrm M}$, $1.92$ for the 2022 $\alpha$--$\beta$ zero-slope specialization, $1.94$ for the 2024 provably stable weighted rule, and $1.94$ for full cell merging.
For $\alpha=0.10$, the corresponding orders are $1.91$, $1.92$, $1.94$, and $1.94$.  All four retain approximately second-order convergence.

The finite-$\alpha$ value $s_{\mathrm M}$ also improves slightly on the leading
choice $s_0$ at every measured resolution.  For the sine profile, replacing
$s_0$ by $s_{\mathrm M}$ reduces $E_{1,h}(T)$ by approximately $6.2\%$
at $N=100$ and $4.7\%$ at $N=800$ when $\alpha=0.15$, and by
approximately $3.8\%$ and $3.0\%$, respectively, when $\alpha=0.10$.
Thus the finite-$\alpha$ correction predicted by
Theorem~\ref{thm:muscl-crossing} is visible but smaller than the primary
gain obtained by moving away from full cell merging.

\begin{figure}[ht]
\centering
\includegraphics[width=0.86\textwidth]{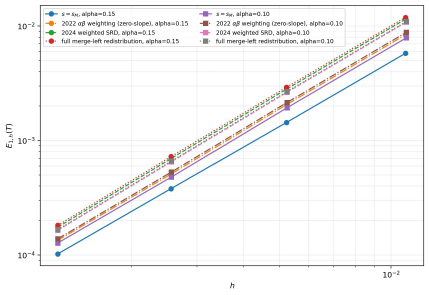}
\caption{Grid convergence of $E_{1,h}(T)$ for the smooth periodic sine-wave
benchmark using the MUSCL value from Theorem~\ref{thm:muscl-crossing} $s=s_{\mathrm M}$, the
specialization with neighborhood reconstruction slopes set to zero of the 2022 $\alpha$--$\beta$
weighting~\cite{GiulianiEtAl2022}, the 2024 provably stable weighted
SRD rule~\cite{BergerGiuliani2024}, and full cell merging.
All four retain approximately second-order convergence.}
\label{fig:partial_srd_convergence}
\end{figure}

Table~\ref{tab:raw-convergence} reports the directly measured values for
$s=s_{\mathrm M}$ and the three comparators.  The complete $s_0$ and
$s_{\mathrm M}$ measurements are provided in
\texttt{muscl\_s0\_vs\_sM\_convergence.csv}.

\begin{table}[ht]
\centering
\small
\begin{tabular}{ccrrrrr}
$\alpha$ & profile & $N$ &
$E_{1,h}^{(s_{\mathrm M})}(T)$ &
$E_{1,h}^{(\alpha\beta,0)}(T)$ &
$E_{1,h}^{(\mathrm{BG24})}(T)$ &
$E_{1,h}^{(\mathrm{merge})}(T)$ \\
\hline
0.15 & sine & 100 & $5.782e-03$ & $8.399e-03$ & $1.150e-02$ & $1.192e-02$ \\
0.15 & sine & 200 & $1.439e-03$ & $2.060e-03$ & $2.812e-03$ & $2.923e-03$ \\
0.15 & sine & 400 & $3.802e-04$ & $5.103e-04$ & $6.964e-04$ & $7.241e-04$ \\
0.15 & sine & 800 & $1.023e-04$ & $1.344e-04$ & $1.750e-04$ & $1.816e-04$ \\
0.15 & Gaussian & 100 & $2.784e-02$ & $3.265e-02$ & $3.805e-02$ & $3.875e-02$ \\
0.15 & Gaussian & 200 & $8.346e-03$ & $9.627e-03$ & $1.135e-02$ & $1.160e-02$ \\
0.15 & Gaussian & 400 & $2.580e-03$ & $2.787e-03$ & $3.072e-03$ & $3.120e-03$ \\
0.15 & Gaussian & 800 & $7.025e-04$ & $7.434e-04$ & $8.025e-04$ & $8.120e-04$ \\
0.10 & sine & 100 & $7.899e-03$ & $8.820e-03$ & $1.085e-02$ & $1.098e-02$ \\
0.10 & sine & 200 & $1.931e-03$ & $2.149e-03$ & $2.640e-03$ & $2.680e-03$ \\
0.10 & sine & 400 & $4.795e-04$ & $5.305e-04$ & $6.521e-04$ & $6.624e-04$ \\
0.10 & sine & 800 & $1.274e-04$ & $1.386e-04$ & $1.648e-04$ & $1.671e-04$ \\
0.10 & Gaussian & 100 & $3.192e-02$ & $3.361e-02$ & $3.717e-02$ & $3.740e-02$ \\
0.10 & Gaussian & 200 & $9.382e-03$ & $9.859e-03$ & $1.098e-02$ & $1.107e-02$ \\
0.10 & Gaussian & 400 & $2.751e-03$ & $2.823e-03$ & $3.004e-03$ & $3.023e-03$ \\
0.10 & Gaussian & 800 & $7.352e-04$ & $7.502e-04$ & $7.887e-04$ & $7.920e-04$ \\
\end{tabular}
\caption{Measured one-period $E_{1,h}(T)$ values for the smooth convergence
tests using the finite-$\alpha$ MUSCL threshold and the three comparison operators.
All entries are direct simulation outputs.}
\label{tab:raw-convergence}
\end{table}
\subsection{Interpolated cell count across fixed error levels}

For the sine-wave data, we compare the total cell count required to reach
five prescribed error levels,
\[
    E_{1,h}(T)\in
    \{5\times10^{-3},\,2\times10^{-3},\,10^{-3},\,
      5\times10^{-4},\,2\times10^{-4}\}.
\]
Interpolation is performed in $\log h$ versus $\log E_{1,h}(T)$ between the
two measured resolutions that bracket each target.  The resulting mesh scale
$h_*$ is converted back to total cell count through
\[
    N_{\mathrm{est}}=\frac{1}{h_*}+m(1-\alpha),
    \qquad m=10.
\]
Every reported value lies between measured resolutions
$N\in\{100,200,400,800\}$; no extrapolation is used.

At $E_{1,h}(T)=10^{-3}$ and $\alpha=0.15$, the estimates are
$N_{\mathrm{est}}=241$ for $s=s_{\mathrm M}$, $248$ for $s=s_0$,
$286$ for the 2022 $\alpha$--$\beta$ zero-slope specialization, $334$ for
the 2024 provably stable weighted rule, and $340$ for full cell merging
redistribution.  The finite-$\alpha$ MUSCL value therefore reduces the required cell
count by $2.7\%$ relative to $s_0$, $15.6\%$ relative to the 2022
specialization, and $27.7\%$ relative to the 2024 weighted rule.

For $\alpha=0.10$, the corresponding estimates are $277$, $282$, $292$,
$323$, and $326$.  The reductions for $s=s_{\mathrm M}$ are $1.8\%$,
$5.0\%$, and $14.3\%$ relative to $s_0$, the 2022 specialization, and the
2024 weighted rule, respectively.

Across all five target errors, the reduction of $s=s_{\mathrm M}$ relative
to the 2024 weighted rule ranges from $25.0\%$ to $28.5\%$ for
$\alpha=0.15$ and from $12.9\%$ to $14.3\%$ for $\alpha=0.10$.
Figure~\ref{fig:partial_srd_required_grid_cells} shows the theorem-based
MUSCL value against the three comparison operators.  These are interpolated
cell-count estimates at fixed error, not wall-clock timing measurements.

\begin{figure}[ht]
\centering
\includegraphics[width=0.88\textwidth]{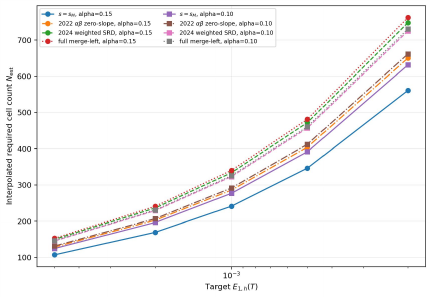}
\caption{Interpolated total cell count $N_{\mathrm{est}}$ required to reach
five fixed values of $E_{1,h}(T)$ in the sine-wave benchmark.  The
MUSCL value from Theorem~\ref{thm:muscl-crossing} $s=s_{\mathrm M}$ is compared with the 2022
$\alpha$--$\beta$ zero-slope specialization~\cite{GiulianiEtAl2022}, the
2024 provably stable weighted SRD rule~\cite{BergerGiuliani2024}, and full
cell merging.  Interpolation is performed in $\log h$ versus
$\log E_{1,h}(T)$, and every target is bracketed by measured resolutions.}
\label{fig:partial_srd_required_grid_cells}
\end{figure}

\subsection{Dependence on cut-cell count and layout}

The preceding test uses ten approximately evenly separated cut cells.  We
therefore repeat the one-period smooth tests on $N=400$ cells while varying
the number of non-adjacent cut cells over
$m\in\{1,2,5,10,20\}$.  For each $m$ we use one evenly spaced layout and ten pseudorandom non-adjacent layouts.
The pseudorandom layouts are generated with NumPy's PCG64 generator using seed
$20260808$: indices are sampled without replacement from
$\{5,\ldots,N-6\}$ and a draw is rejected if any two selected cut cells are
adjacent.  The file \texttt{partial\_srd\_layout\_robustness\_seeded\_raw.csv}
records the cut indices for every accepted layout.  All other numerical choices are unchanged.
For each layout define the relative one-period error reduction
\begin{equation}
    \mathcal G_1
    :=100\,\frac{E_{1,h}^{(1)}(T)-E_{1,h}^{(s_0)}(T)}
                   {E_{1,h}^{(1)}(T)}.
    \label{eq:calG1}
\end{equation}
Thus $\mathcal G_1>0$ is equivalent to
$E_{1,h}^{(s_0)}(T)<E_{1,h}^{(1)}(T)$.

Every $s=s_0$ and $s=1$ run reaches $T=1$, and
$E_{M,h}(T)<1.7\times10^{-15}$ in all seeded-layout runs.  The accuracy
difference grows with the number of redistributed neighborhoods.  For a
Gaussian profile with $\alpha=0.15$, the mean $\mathcal G_1$ is $0.7\%$, $1.6\%$, $7.0\%$, $16.8\%$, and $36.2\%$ as $m$
increases from $1$ to $20$.  For the smooth sine wave, the corresponding
means are $-0.7\%$, $0.9\%$, $21.0\%$, $44.6\%$, and $52.1\%$.  At
$\alpha=0.10$ the same qualitative trend holds with smaller gains because
$s_0=2/3$ is closer to full cell merging.  Once $m\ge5$, $\mathcal G_1>0$
for every tested layout for both smooth profiles.

\paragraph{Practical guidance.}
For $m=1$ or $2$, the measured gain is small and can change sign.  Within this
benchmark, there is therefore little accuracy-based reason to replace full
redistribution when only one or two isolated cut cells are active.  The case
for $s_0$ becomes stronger when several separated redistribution
neighborhoods are present and the accumulated dissipation from full
redistribution materially affects the error.

\begin{figure}[ht]
\centering
\includegraphics[width=0.48\textwidth]{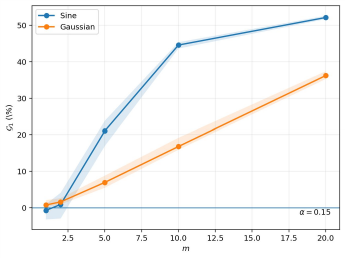}
\hfill
\includegraphics[width=0.48\textwidth]{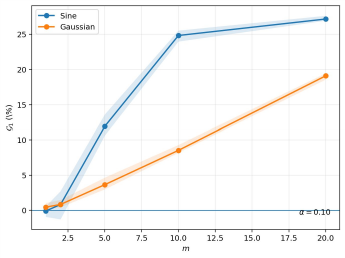}
\caption{$\mathcal G_1$ from \eqref{eq:calG1} versus the number $m$ of separated cut
cells.  Each point is the mean over one evenly spaced and ten pseudorandom
layouts; shaded bands show the observed range.  \textit{Left}:
$\alpha=0.15$.  \textit{Right}: $\alpha=0.10$.  Seed $20260808$ and every
accepted cut-cell index set are recorded with the raw data.}
\label{fig:partial_srd_layout_robustness}
\end{figure}

\subsection{100-period higher-order test}

We extend the smooth benchmark to
$100$ advection periods with $N=200$ and ten evenly spaced cut cells.  For
$k=0,\ldots,100$, the error is sampled at $t=kT$.  The same
nominal background CFL $\lambda_0=0.4$ is used.  We test both the leading
choice $s_0$ and the finite-$\alpha$ MUSCL value $s_{\mathrm M}$; full cell merging
redistribution uses $s=1$.

All eight $s=s_0$ and $s=1$ runs complete 100 periods, and
$E_{M,h}(t)<8\times10^{-14}$ at every recorded integer period.  The four
additional $s=s_{\mathrm M}$ runs also complete 100 periods.  Their
$E_{1,h}(100T)$ values are $0.117$ and $0.149$ for the sine and Gaussian
profiles at $\alpha=0.15$, and $0.144$ and $0.158$ at $\alpha=0.10$.
The largest recorded integer-period mass drift among these finite-$\alpha$-threshold
runs is $1.21\times10^{-12}$.  At $\alpha=0.10$,
$E_{1,h}(100T)$ is $0.148$ for $s=s_0$ and $0.178$ for $s=1$ in the sine
case, and $0.159$ and $0.168$, respectively, in the Gaussian case.  At
$\alpha=0.15$, the corresponding values are $0.123$ and $0.187$ for sine,
and $0.151$ and $0.171$ for Gaussian.  These are finite-duration numerical observations and are interpreted only as
empirical boundedness over the tested interval.  For the
control runs, failure is declared when a non-finite component occurs or when
$\|U^n\|_\infty>10^8$.  Under this criterion, the same higher-order scheme
with $s=0$ fails before $0.06T$ for both
$\alpha=0.10$ profiles at $N=200$, whereas both $\alpha=0.15$ control runs
complete 100 periods.  Table~\ref{tab:long-time} collects the corresponding
$s_{\mathrm M}$, $s_0$, and full-merging measurements.

\begin{table}[ht]
\centering
\small
\begin{tabular}{cccrr}
$\alpha$ & profile & $s$ & $E_{1,h}(100T)$ & $\max_{0\le k\le100}E_{M,h}(kT)$ \\
\hline
0.10 & sine     & $s_{\mathrm M}$ & 0.144 & $4.92\times10^{-14}$ \\
0.10 & sine     & $s_0$            & 0.148 & $4.85\times10^{-14}$ \\
0.10 & sine     & $1$              & 0.178 & $7.74\times10^{-14}$ \\
0.10 & Gaussian & $s_{\mathrm M}$ & 0.158 & $1.46\times10^{-14}$ \\
0.10 & Gaussian & $s_0$            & 0.159 & $1.47\times10^{-14}$ \\
0.10 & Gaussian & $1$              & 0.168 & $2.35\times10^{-14}$ \\
0.15 & sine     & $s_{\mathrm M}$ & 0.117 & $1.20\times10^{-12}$ \\
0.15 & sine     & $s_0$            & 0.123 & $1.11\times10^{-14}$ \\
0.15 & sine     & $1$              & 0.187 & $2.53\times10^{-14}$ \\
0.15 & Gaussian & $s_{\mathrm M}$ & 0.149 & $3.24\times10^{-13}$ \\
0.15 & Gaussian & $s_0$            & 0.151 & $3.00\times10^{-15}$ \\
0.15 & Gaussian & $1$              & 0.171 & $7.61\times10^{-15}$ \\
\end{tabular}
\caption{One-hundred-period higher-order measurements for $N=200$ and ten
evenly spaced cut cells.  Every listed run reaches $100T$.  The last column is
the maximum recorded mass drift over the integer-period samples.}
\label{tab:long-time}
\end{table}

\begin{figure}[ht]
\centering
\includegraphics[width=0.48\textwidth]{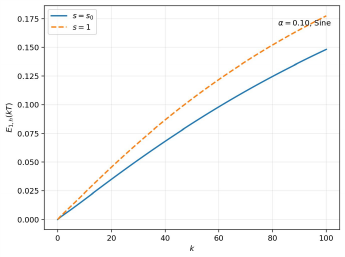}
\hfill
\includegraphics[width=0.48\textwidth]{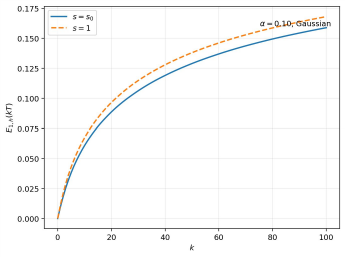}\\[4pt]
\includegraphics[width=0.48\textwidth]{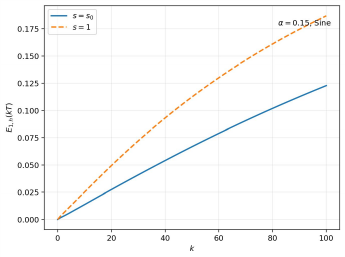}
\hfill
\includegraphics[width=0.48\textwidth]{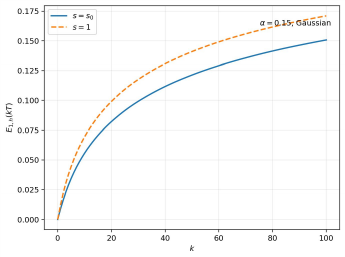}
\caption{Measured $E_{1,h}(kT)$, $k=0,\ldots,100$, for
the higher-order benchmark with $N=200$ and ten evenly spaced cut cells.
The plotted $s=s_0$ and $s=1$ runs complete each of the four combinations
of profile and $\alpha$, with $E_{M,h}<8\times10^{-14}$ at the recorded
integer periods.  The additional $s=s_{\mathrm M}$ measurements are listed
in Table~\ref{tab:long-time}.  Left column: sine; right column: Gaussian.
Top row: $\alpha=0.10$; bottom row: $\alpha=0.15$.}
\label{fig:long-time-higher-order}
\end{figure}

The first-order matrix theorem, Theorem~\ref{thm:criterion}, does not by
itself cover these higher-order experiments.  The localized MUSCL crossing is analyzed directly in
Theorem~\ref{thm:muscl-crossing} and Corollary~\ref{cor:muscl-ssprk2}.
The exact finite-$N$ periodic crossing, the multi-cut accuracy results, and
the 100-period statements are finite-resolution numerical evidence.

\subsection*{Scope of the numerical evidence}
Theorem~\ref{thm:criterion} gives the exact first-order spectral crossing,
while Theorem~\ref{thm:muscl-crossing} and
Corollary~\ref{cor:muscl-ssprk2} derive the localized crossing for its
MUSCL--minmod branch analyzed in Theorem~\ref{thm:muscl-crossing}.  The full periodic one-cut
crossing is checked numerically rather than promoted to a theorem.
The numerical experiments use finite $\alpha$ and multiple separated cut
cells, so the measured multi-cut accuracy gains remain empirical at finite
resolution.  Section~\ref{sec:2d-experiment} extends the spectral experiment
to an assembled two-dimensional embedded-boundary operator, but it does not
turn the scalar one-dimensional formula into a multidimensional theorem.  The
MUSCL theorem is a localized-cut-cell mode result and does not establish global
nonlinear stability, power boundedness, or a universal threshold for PPM,
WENO, implicit integrators, adjacent cut cells, nonlinear systems, or
multidimensional weighted-SRD neighborhoods.  Those cases require
analyzing the corresponding discrete operator and how redistribution changes
its unstable eigenvalues.

\subsection*{Reproducibility summary}
Table~\ref{tab:reproducibility} consolidates the numerical choices used in the
higher-order experiments.

\begin{table}[ht]
\centering
\small
\begin{tabular}{ll}
quantity & value or definition \\
\hline
domain and speed & $x\in[0,1)$ periodic, $a=1$, $T=1$ \\
mesh scale & $h=(N-m+m\alpha)^{-1}$ \\
cut-cell width & $\alpha h$ \\
reconstruction & piecewise linear with minmod limiter \\
time integrator & SSPRK2, redistribution after each Euler stage \\
nominal background CFL & $\lambda_0=0.4$ \\
partial blends & $s=s_0(L_0)$ or $s=s_{\mathrm M}(\alpha,L_0)$ \\
reference averages & 16-point Gauss--Legendre quadrature per cell \\
Gaussian width & $0.06$ in $d_{\mathbb T}(x,0.25)$ \\
layout seed & NumPy PCG64, seed $20260808$ \\
control failure criterion & non-finite state or $\|U^n\|_\infty>10^8$ \\
\end{tabular}
\caption{Numerical settings used in the higher-order benchmark.  The
convergence section gives the deterministic cut-cell placement formula, and
the seeded-layout data file records every pseudorandom accepted index set.}
\label{tab:reproducibility}
\end{table}

The accompanying raw-data files are
\texttt{partial\_srd\_grid\_convergence.csv},
\texttt{partial\_srd\_layout\_robustness\_seeded\_raw.csv},
\texttt{verified\_threshold\_check.csv},
\texttt{verified\_transient\_time\_history\_summary.csv},
\texttt{verified\_transient\_time\_history\_summary\_alpha\_0.15.csv},
\texttt{long\_time\_N200\_alpha\_010\_history\_notation.csv},
\texttt{long\_time\_N200\_alpha\_015\_history\_notation.csv},
\texttt{four\_method\_convergence.csv},
\texttt{muscl\_s0\_vs\_sM\_convergence.csv},
\texttt{muscl\_sM\_fixed\_error\_comparison.csv},
\texttt{muscl\_sM\_long100\_integer\_mass.csv},
\texttt{muscl\_threshold\_crossing\_check.csv},
\texttt{two\_dim\_spectral\_summary.csv},
\texttt{two\_dim\_spectral\_curves.csv}, and
\texttt{two\_dim\_refinement\_check.csv}.  The seeded
layout file records every accepted cut-cell index set.

\clearpage

\section{Conclusion}
\label{sec:conclusion}

The periodic upwind model with cell merging yields an explicit spectral
threshold.  For sufficiently small $\alpha$, the base update has a
localized eigenvalue outside $\overline{\mathbb D}$ whose eigenvector is
concentrated at the cut cell.  The cell-merging correction approaches the
same direction in the volume-weighted geometry.  Tracking the localized branch
$\mu_u(s)$ therefore determines the redistribution threshold.

For the full $N$-cell blended operator
\[
    A_s=(1-s)A+sA_{\mathcal R},
\]
fixed $L>2$, and $N\ge4$, the localized eigenvalue re-enters the closed
unit disk through $\mu=-1$ at
\[
 s_{-1}^{(N)}(\alpha,L)
 =
 \frac{L-2}{L-1}
 -
 \frac{L(L-2)}{2(L-1)^2}\alpha^2
 +O(\alpha^3).
\]
Thus
\[
    s_0(L)=\frac{L-2}{L-1}
\]
is asymptotically sufficient and differs from the exact local redistribution
threshold by $O(\alpha^2)$ in the regime of the theorem.  The quantity $s_0(L)$ is the leading-order blending parameter associated
with the return of $\mu_u$ to the unit disk.  For moderate fixed $L>2$, $s_0(L)$ can be substantially below $1$, while
$s_0(L)\to1$ as $L\to\infty$.

The theorem is spectral; uniform power boundedness is a separate property.
Direct finite-time calculations in the volume-weighted induced norm show transient
amplification of approximately $2.01$ for $\alpha=0.10$ and $2.34$ for
$\alpha=0.15$ at the exact $L=4$ crossing, even though no eigenvalue lies
outside the unit disk.  Thus the redistribution threshold eliminates exponential growth of the localized
mode, while nonnormal transient growth can remain.

The broader block-matrix analysis identifies what is structural and what is
specific to the model.  The matrix $\Gamma$ controls the unbounded
spectrum of a class of singularly perturbed finite-volume operators, and the
associated invariant subspace converges to the subspace spanned by the cut-cell coordinates.
The existence and localization of these cut-cell modes therefore generalize
more readily than the scalar threshold formula.  For a different flux,
reconstruction, nonlinear system, or multidimensional redistribution rule,
the corresponding leading cut-cell block and the action of redistribution on its
spectrum must be derived again.

The MUSCL analysis applies the same localized-eigenvalue calculation to the
second-order minmod stage used in the benchmark and produces both the leading
value $s_0$ and a computable
finite-$\alpha$ correction $s_{\mathrm M}$.  In the higher-order benchmark,
$s=s_{\mathrm M}$ retains approximately second-order convergence and has the
lowest error among the tested operators.  At $E_{1,h}(T)=10^{-3}$, it
requires approximately $27.7\%$ fewer cells than the 2024 provably stable
weighted rule for $\alpha=0.15$ and $14.3\%$ fewer for $\alpha=0.10$.
The corresponding reductions relative to the 2022 $\alpha$--$\beta$
zero-slope specialization are $15.6\%$ and $5.0\%$.  The 2022 comparison is
intentionally labeled as a zero-slope specialization and is not presented as
a reproduction of the full multidimensional weighted algorithm.

The rotated-channel experiment gives a complementary multidimensional check.
For the assembled two-dimensional cut-cell matrix, the unstable eigenvectors
remain concentrated near the boundary cut cells.  The smallest redistribution
fraction for which $\rho(A_s)\le1$ is substantially below full cell merging.
Substituting the cut-cell row coefficient into the one-dimensional formula
nearly matches this threshold for the smallest triangles but underpredicts it
by about $0.0164$ at $\alpha=0.05$.  Thus the localization persists, while the
finite-dimensional threshold depends on the assembled geometry and the
redistribution operator.

In separate 100-period tests with $N=200$, all eight $s=s_0$ and $s=1$
runs for the sine and Gaussian profiles complete the integration, and
$E_{M,h}(t)<8\times10^{-14}$ at every recorded integer period.  These observations show bounded evolution over the tested 100 periods for
$s=s_0$ and $s=s_{\mathrm M}$.  They do not establish uniform stability.

The present result is a model theorem.  A next extension is to identify the
eigenvalue cluster localized near the cut cells of a multidimensional
weighted-SRD neighborhood.  If that cluster is denoted by $\Sigma_c(A_s)$,
one could determine
\[
    s_*=\inf\left\{s:
    \max_{\mu\in\Sigma_c(A_s)}|\mu|\le1\right\}.
\]
This would choose the blending parameter from the localized cut-cell
eigenvalues instead of prescribing full cell merging independently of those
eigenvalues.

\section*{Data availability}
All numerical data and scripts used to generate the reported computational
results are included in the supplementary material accompanying this
submission.

\section*{Declarations}

\noindent\textbf{Funding.} This research did not receive any specific grant from funding agencies in the public, commercial, or not-for-profit sectors.

\noindent\textbf{Declaration of competing interests.}
Declarations of interest: none.

\noindent\textbf{CRediT author statement.} Justo E. Karell: Conceptualization,
Methodology, Formal analysis, Software, Validation, Investigation, Data
curation, Visualization, Writing -- original draft, Writing -- review \&
editing.

\section*{Declaration of generative AI and AI-assisted technologies in the manuscript preparation process}
During the preparation of this work, the author used ChatGPT (OpenAI) to
assist with numerical code generation, consistency checking, and language
editing.  The author reviewed and verified the resulting analysis, code,
numerical outputs, and manuscript text and takes full responsibility for the
content of the article.

\end{document}